\newcommand\blfootnote[1]{%
  \begingroup
  \renewcommand\thefootnote{}\footnote{#1}%
  \addtocounter{footnote}{-1}%
  \endgroup
}
\newtheorem{proposition}{Proposition}
\newtheorem{theorem}{Theorem}
\newtheorem{lemma}{Lemma}
\newtheorem{corollary}{Corollary}
\newtheorem{remark}{Remark}
\newtheorem{claim}{Claim}
\title{Sufficient conditions
for a digraph to admit a $(1,\leq\ell)$-identifying code
}
\author{ C. Balbuena$^{1}$, C. Dalf\'o$^{2}$, B. Mart\'\i nez-Barona$^{1}$
 \\[2ex]
$^1${\footnotesize Departament d'Enginyeria Civil i Ambiental, Universitat
Polit\`ecnica de Catalunya, }\\
{\footnotesize 
Barcelona, Catalonia, Spain.}\\[2ex]
$^2${\footnotesize Departament de Matem\`atica, Universitat
 de Lleida, }\\
{\footnotesize 
Igualada (Barcelona), Catalonia.}\\[2ex]
{\footnotesize e-mails: \{m.camino.balbuena, berenice.martinez\}@upc.edu, cristina.dalfo@matematica.udl.cat}
}
\date{}
\begin{document}

\maketitle

\blfootnote{
\begin{minipage}[l]{0.3\textwidth} \includegraphics[trim=10cm 6cm 10cm 5cm,clip,scale=0.15]{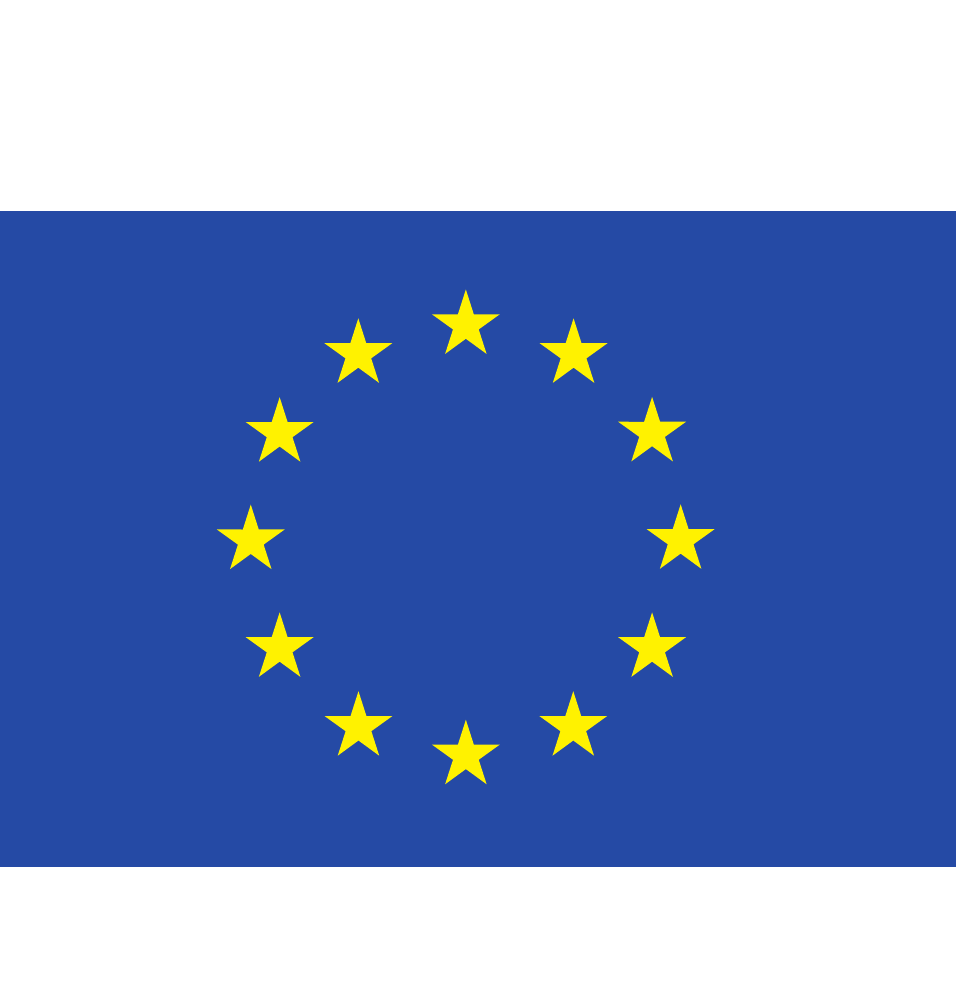} \end{minipage}  \hspace{-2cm} \begin{minipage}[l][1cm]{0.79\textwidth}
   The last two authors have also received funding from the European Union's Horizon 2020 research and innovation programme under the Marie Sk\l{}odowska-Curie grant agreement No 734922.
  \end{minipage}}

\begin{abstract}
A $(1,\le \ell)$-identifying code in a digraph $D$ is a  subset $C$ of
vertices of $D$ such that all distinct subsets of vertices  of cardinality at most $\ell$ have distinct closed in-neighbourhoods
within $C$. In this paper, we give some sufficient conditions for a digraph of minimum in-degree  $\delta^-\ge 1$ to admit a $(1,\le \ell)$-identifying code  for $\ell=\delta^-, \delta^-+1$. As a corollary,
we
obtain the result by Laihonen that states that
a  graph of minimum degree $\delta\ge 2$ and girth at least 7 admits a $(1,\le \delta)$-identifying code. Moreover,
we prove that every $1$-in-regular digraph has a $(1,\le 2)$-identifying code if and only if the girth of the digraph is at least 5. We also characterize all the 2-in-regular digraphs admitting a $(1,\le \ell)$-identifying code for $\ell=2,3$.
\end{abstract}

\noindent{\em Mathematics Subject Classifications:} 05C69, 05C20

\noindent{\em Keywords:} Graph; digraph;  identifying code.

\section{Introduction}

The aim of this paper is to study identifying codes in digraphs.
We  consider  simple digraphs (or directed graphs) without loops or multiple edges. Unless otherwise
stated, we follow the book by Bang-Jensen and Gutin \cite{BG07} for terminology and definitions.

Let $D=(V,A)$ be a digraph with vertex set $V(D)=V$ and arc set $A(D)=A$.
  A vertex $u$ is \emph{adjacent to} a vertex $v$ if  $(u,v)\in A$. If both arcs $(u,v),(v,u)\in A$,  then we say that   they form a \emph{digon}. A digraph is \emph{symmetric} if $(u,v)\in A$ implies $(v,u)\in A$. A digon is often said a \emph{symmetric arc} of $D$. A digraph $D$ is said to be \emph{oriented graph} if $D$ has no digon. The \emph{girth} $g$ of a digraph is the length of a shortest directed cycle. Hence, an  oriented graph has girth $g\ge 3$. Moreover, observe that every graph $G$ with vertex set $V$ and edge set $E$ can be seen as a symmetric digraph denoted by $ \overset{\text{$\leftrightarrow $}}{G}$, replacing each edge $uv\in E$ by the digon $(u,v)$ and $(v,u)$.
  The \emph{out-neighborhood} of a vertex $u$ is $N^+(u)=\{v\in V: (u,v)\in A\}$ and the \emph{in-neighborhood} of  $u$ is $N^-(u)=\{v\in V: (v,u)\in A\}$. The {\em closed in-neighbourhood} of $u $ is $N^-[u]=\{u\}\cup N^-(u)$.
  The
 \emph{out-degree} of $u$ is $d^{+}(u)=|N^+(u)|$ and its \emph{in-degree}  $d^{-}(u)=|N^-(u)|$. We denote by $\delta^+(D)$ the minimum out-degree of the vertices in $D$, and by $\delta^-(D)$ the minimum in-degree. The minimum degree is $\delta(D)=\min\{\delta^+(D), \delta^-(D)\}$.
 A digraph $D$ is said to be $d$-\emph{in-regular} if $d^-(v)=d$ for all $v\in V$,  and  $d$\emph{-regular} if $d+(v)=d^-(v)=d$ for all $v\in V$.

Given a vertex subset $U\subseteq V$, let $N^-[U]=\bigcup_{u\in U}N^-[u]$.
 For a given integer $\ell\ge 1$, a
vertex subset $C\subseteq V$ is a {\em $(1,\leq\ell)$-identifying code} in $D$ when for all distinct subsets $X,Y\subseteq V$, with $1\le |X|,|Y|\leq\ell$, we have
\begin{equation}\label{code}
N^-[X]\cap C \neq N^-[Y]\cap C.
\end{equation}
The definition of a $(1,\leq\ell)$-identifying code for graphs was introduced by Karpovsky, Chakrabarty and Levitin~\cite{kcl98},
and it is obtained by omitting the superscript sign minus in the neighborhoods in (\ref{code}). Thus, the definition for digraphs  is a natural extension  of the concept of  $(1,\le \ell)$-identifying codes in
graphs. A $(1,\le 1)$-identifying code is known as an \emph{identifying code}.
Thus,
an identifying code of a graph is a  set of vertices such that
any two vertices of the graph have distinct closed neighborhoods within this set.
Identifying codes model fault-diagnosis in multiprocessor systems, and these are used in other
applications such as the design of emergency sensor networks. Identifying codes in graphs have received much more attention by researchers.
 Honkala and  Laihonen \cite{hl08} studied
 identifying codes in the king grid that are robust against edge deletions.
More recently,
identifying codes have been considered for vertex-transitive graphs and strongly regular graphs by Gravier et al. \cite{GPRSV15}, and for graphs of girth at least five by Balbuena, Foucaud and Hansberg \cite{bfh15}. Other results on identifying
codes in specific families of graphs, as well as on the smallest cardinality of an identifying code $C$,
can be seen in  Bertrand et al. \cite{bchl04}, Charon et al. \cite{cchl10},   Exoo et al. \cite{ejlr09,ejlr10}, and the online bibliography of   Lobstein \cite{Lon}.

A graph $G$ is said to admit a $(1,\le \ell)$-identifying code if there is a subset of vertices $C \subseteq V(G)$ such that $C$ is a  $(1,\le \ell)$-identifying code in $G$. Not all graphs admit $(1,\le \ell)$-identifying codes. For instance, Laihonen \cite{l08} pointed out
that a graph containing an isolated edge cannot admit a $(1,\le 1)$-identifying code, because
clearly,  if $uv \in  E(G)$ is isolated, then $N[u] = \{u, v\} = N[v]$.
In fact, a graph containing an isolated complete bipartite graph $K_{r,d}$, with $r\leq d$, cannot admit a $(1,\le d)$-identifying code.
It is not difficult to see that if $G$ admits a $(1,\le \ell)$-identifying
code, then $C = V$ is also a $(1,\le \ell)$-identifying code. Hence, a graph admits a $(1,\le \ell)$-identifying code
if and only if the sets $N[X]$ are mutually different for all $X \subseteq V $, with $|X| \le  \ell$.
Laihonen and Ranto \cite{lr01} proved that if $G$ is a connected graph with at least three vertices admitting
a $(1,\le \ell)$-identifying code, then the minimum degree is $\delta(G) \ge  \ell$. Gravier and Moncel \cite{gm05} showed the
existence of a graph with minimum degree exactly $\ell$ admitting a $(1,\le \ell)$-identifying code.
Laihonen \cite{l08} proved the following result.

\begin{theorem} \label{laiho} \cite{l08}
 Let $k\geq2$ be an integer.
 \begin{enumerate}
\item If a $k$-regular graph has girth $g\geq7$, then it admits a $(1,\leq k)$-identifying code.
\item If a $k$-regular graph has girth $g\geq5$, then it admits a $(1,\leq k-1)$-identifying code.
\end{enumerate}
\end{theorem}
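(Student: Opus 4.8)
The plan is to prove both parts by taking $C=V(G)$, since the excerpt already records that a graph admits a $(1,\le\ell)$-identifying code if and only if the sets $N[X]$ are pairwise distinct over all $X\subseteq V(G)$ with $|X|\le\ell$. So it suffices to assume for contradiction that there are distinct sets $X,Y$ with $1\le|X|,|Y|\le\ell$ and $N[X]=N[Y]$, and to reach either a counting contradiction (part (2)) or a forbidden short cycle (part (1)). Since $X\neq Y$, the symmetric difference is nonempty, so I fix $u\in X\setminus Y$ (the case $u\in Y\setminus X$ being symmetric); note $u\notin Y$ and $N[u]\subseteq N[X]=N[Y]$.

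The key device is a local covering count valid whenever $g\ge5$. As $g\ge5$ forbids triangles and $4$-cycles, any two vertices have at most one common neighbour, and two adjacent vertices have none. Hence, for $y\in Y$, the set $N[u]\cap N[y]$ equals $\{u,y\}$ if $y$ is adjacent to $u$, is a single common neighbour if $d(u,y)=2$, and is empty if $d(u,y)\ge3$. Writing $A=Y\cap N(u)$ and letting $Y_2$ be the vertices of $Y$ at distance $2$ from $u$, and observing that $u$ itself can be covered only by a neighbour in $A$, the covered vertices of $N[u]$ number at most $u$ together with the neighbours in $A$ and at most one bridge per vertex of $Y_2$:
\[
|N[u]\cap N[Y]|\ \le\ 1+|A|+|Y_2|\ \le\ 1+|Y|,
\]
the last step because $A$ and $Y_2$ are disjoint subsets of $Y$. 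Since $N[u]\subseteq N[Y]$, we get $k+1=|N[u]|\le 1+|Y|$.

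For part (2), with $\ell=k-1$ this reads $k+1\le 1+(k-1)=k$, an immediate contradiction; hence the sets $N[X]$ are all distinct and $C=V(G)$ is a $(1,\le k-1)$-identifying code. For part (1), with $\ell=k$ the inequality is tight, which forces a rigid structure: $|X|=|Y|=k$, every vertex of $Y$ lies within distance $2$ of $u$, $A\neq\varnothing$, and the $k$ neighbours of $u$ split as $A$ together with pairwise distinct ``bridge'' neighbours, each vertex of $Y_2$ having a \emph{unique} common neighbour with $u$ (as $g\ge5$ forbids $4$-cycles) and tightness forcing these bridges to be distinct, giving a bijection between $Y_2$ and the bridge neighbours. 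Because $|X|=|Y|=k$, neither set contains the other, so I may also fix $v\in Y\setminus X$, which enjoys the symmetric rigid structure with the roles of $X$ and $Y$ interchanged; moreover $d(u,v)\le2$.

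The final and hardest step is to convert these two interlocking configurations into a cycle of length at most $6$, contradicting $g\ge7$. The mechanism is to splice a length-$2$ bridge path at $u$ to one at $v$ along the edge or common neighbour relating $u$ and $v$, and to use the bijective attachments to force a coincidence or an extra adjacency that closes the cycle. Concretely, in the representative case $|X\setminus Y|=|Y\setminus X|=1$ one has $X=S\cup\{u\}$, $Y=S\cup\{v\}$ with $S=X\cap Y$ and $|S|=k-1$; then the attachment of each vertex of $S$ to a neighbour of $u$ (respectively of $v$) is injective, since a shared attachment produces a triangle or a $4$-cycle, hence bijective onto $S$, and comparing the two attachment patterns of $S$ at $u$ and at $v$ yields a cycle of length $4$ or $5$ (for example $u\!-\!v\!-\!w\!-\!s\!-\!u$ when some $s\in S$ is adjacent to $u$, or $u\!-\!w\!-\!s\!-\!w'\!-\!v\!-\!u$ when no vertex of $S$ is adjacent to $u$ or $v$). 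I expect this case analysis—organizing, for every way the neighbourhoods of $u$ and $v$ can be covered and for general $|X\setminus Y|\ge1$, the extraction of a length-$\le 6$ cycle—to be the main obstacle, whereas the reduction and the counting bound are routine. The threshold $g\ge7$ is exactly what forbids all the length-$\le 6$ cycles produced this way, and part (1) is precisely the statement used as a corollary elsewhere in this paper.
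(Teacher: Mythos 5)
Your reduction to $C=V(G)$ and the local covering count are sound, and they give a complete, correct and self-contained proof of part (2): with $g\ge 5$ the sets $N[u]\cap N[y]$ have exactly the sizes you claim, the bound $k+1=|N[u]|\le 1+|A|+|Y_2|\le 1+|Y|$ follows, and $\ell=k-1$ is an immediate contradiction. This is a genuinely different route from the paper's, which never argues inside the graph at all: it passes to the symmetric digraph obtained by replacing each edge by a digon and invokes Theorem~\ref{theo:ident-code-digraphs-altern}, parts $(i)$ and $(iii)$, the girth hypotheses $g\ge 5$ and $g\ge 7$ being exactly what excludes the forbidden subdigraphs $F_1,F_2$ (respectively $F_1$--$F_9$) from that symmetric digraph.

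For part (1), however, there is a genuine gap. With $\ell=k$ your inequality is tight rather than contradictory, so the entire content of the statement lies in the final step: showing that the rigid structure (all of $Y$ within distance two of $u$, the bridges distinct and bijective onto $N(u)\setminus A$, plus the symmetric structure at $v$) forces a cycle of length at most $6$. You exhibit this only in the case $|X\setminus Y|=|Y\setminus X|=1$, and even there only for two sample configurations, while explicitly deferring the rest (``I expect this case analysis \dots to be the main obstacle''). That analysis is where the hypothesis $g\ge 7$, as opposed to $g\ge 5$, is actually consumed, and it is not routine: for instance, when $u\sim v$ and $s\in S$ is adjacent to $u$, the unique $v$--$s$ bridge may itself be the vertex $u$, so your proposed $4$-cycle $u\,v\,w\,s\,u$ degenerates and a different cycle must be found; and when $|X\setminus Y|\ge 2$ the two rigid structures no longer share a core $S$ of size $k-1$, so the ``splicing'' must be reorganized. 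The paper's proof of Theorem~\ref{theo:ident-code-digraphs-altern}$(iii)$ carries out precisely this kind of bookkeeping, partitioning $N^-(x)$ into three classes and checking all pairs against the configurations $F_1$--$F_9$; some equally systematic case analysis (or an explicit reduction to that digraph theorem) is required before part (1) can be considered proved.
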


Araujo et al. \cite{abmv11} characterized the bipartite $k$-regular graphs of girth at least 6 having a $(1,\leq k)$-identifying code.

Identifying codes for
digraphs  were considered by Charon et al. \cite{CGHL02,CGHLMM06}, and Frieze, Martin, Moncet et al. \cite{FMMRS07} studied identifying codes in random networks. Recently, Foucaud, Naserasr and Parreau~\cite{fnp13} studied identifying codes in digraphs under the name of separating sets, and they called  identifying codes to the separating sets that also are dominating sets.
These authors
characterized the finite digraphs that only admit their whole vertex set as an  identifying code in this meaning.

In this paper,  we   give some sufficient conditions for a digraph of minimum in-degree  $\delta^-\ge 1$ to admit a $(1,\le \ell)$-identifying code  for $\ell=\delta^-, \delta^-+1$. As a corollary,
we
obtain Theorem \ref{laiho}. Moreover,
we prove that every $1$-in-regular digraph has a $(1,\le 2)$-identifying code if and only if the girth of the digraph is at least 5. We also characterize all the 2-in-regular digraphs admitting a $(1,\le \ell)$-identifying code for $\ell=2,3$.

\section{Identifying codes}

In this paper we study the concept  of a $(1,\leq\ell)$-identifying code for digraphs given in (\ref{code}).
We begin by noting that if $C$ is a $(1,\leq \ell)$-identifying code in a digraph $D$, then the whole set of vertices $V$ also is. Thus, we have the following straightforward observation.

\begin{lemma}
\label{basic-lema}
A digraph $D=(V,A)$ admits some $(1,\leq\ell)$-identifying code if and only if
for all distinct subsets $X,Y\subseteq V$ with $|X|,|Y|\leq \ell$, we have
\begin{equation}
N^-[X] \neq N^-[Y].
\end{equation}
\end{lemma}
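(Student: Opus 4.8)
The plan is to prove the equivalence directly in both directions, leaning on the observation made just before the statement that the whole vertex set $V$ is always an eligible code. Throughout I would work with nonempty subsets as in~(\ref{code}); the empty set is harmless, since $u\in N^-[u]$ gives $X\subseteq N^-[X]$, so $N^-[X]$ is nonempty whenever $X$ is, while $N^-[\emptyset]=\emptyset$, and hence the empty set is automatically distinguished from every nonempty subset and imposes no extra condition on the statement.

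For the forward implication, I would assume that $D$ admits some $(1,\le\ell)$-identifying code $C$ and deduce the distinctness of the closed in-neighbourhoods. Fix distinct subsets $X,Y\subseteq V$ with $1\le |X|,|Y|\le\ell$. Since $C$ is a code, $N^-[X]\cap C \neq N^-[Y]\cap C$ by~(\ref{code}). Were $N^-[X]=N^-[Y]$, intersecting both sides with $C$ would force $N^-[X]\cap C = N^-[Y]\cap C$, a contradiction; hence $N^-[X]\neq N^-[Y]$. The point making this direction immediate is that restriction to $C$ can only merge neighbourhoods, never separate them, so distinctness of the restricted sets already forces distinctness of the full sets.

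For the converse I would simply exhibit a code. Assuming $N^-[X]\neq N^-[Y]$ for all distinct $X,Y$ in the relevant range, take $C=V$. Since $N^-[X]\subseteq V$ for every $X$, we have $N^-[X]\cap V = N^-[X]$, and likewise for $Y$, so $N^-[X]\cap C = N^-[X]\neq N^-[Y]=N^-[Y]\cap C$; thus $C=V$ satisfies~(\ref{code}) and $D$ admits a $(1,\le\ell)$-identifying code. There is no real obstacle here: the one thing to get right is the direction of the set-theoretic implication (intersection preserves equality but not inequality), which is precisely why the choice $C=V$ is the natural witness in the converse.
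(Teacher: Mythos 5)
Your proof is correct and matches the paper's reasoning: the paper treats this as a straightforward observation resting on exactly the two facts you spell out, namely that intersecting equal sets with any $C$ preserves equality (forward direction) and that $C=V$ witnesses the converse. Nothing further is needed.
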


As already mentioned in the introduction, Laihonen and Ranto \cite{lr01} proved that if $G$ is a connected graph with at least three vertices admitting
a $(1,\le \ell)$-identifying code, then the minimum degree is $\delta(G) \ge  \ell$. We present the following similar result for digraphs.

\begin{proposition}
 \label{BoundDelta+1}
 Let $D$ be a digraph admitting a $(1,\leq \ell)$-identifying code. Let $u$ be a vertex such that $d^+(u)\geq 1$. Then, $\ell\leq d^-(u)+1$. Furthermore, if $u$ belongs to a digon, then $\ell\leq d^-(u)$.
\end{proposition}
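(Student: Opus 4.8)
The plan is to argue through Lemma~\ref{basic-lema}: since $D$ admits a $(1,\le\ell)$-identifying code, any two \emph{distinct} vertex subsets of cardinality at most $\ell$ must have distinct closed in-neighbourhoods. Hence it suffices to exhibit two distinct subsets with equal closed in-neighbourhoods and then read off a bound on $\ell$ from their sizes. Because $d^+(u)\ge1$, I would fix an out-neighbour $w\in N^+(u)$ and consider the pair
\[
S=\{w\}\cup N^-(u), \qquad S\cup\{u\}.
\]
Since $D$ has no loops, $u\ne w$ and $u\notin N^-(u)$, so $u\notin S$; thus the two sets are genuinely different and $|S\cup\{u\}|=|S|+1$.

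The key step is to show that adjoining $u$ to $S$ does not enlarge the closed in-neighbourhood, that is, $N^-[u]\subseteq N^-[S]$, which gives $N^-[S\cup\{u\}]=N^-[S]\cup N^-[u]=N^-[S]$. This reduces to checking the two parts of $N^-[u]=\{u\}\cup N^-(u)$: every $v\in N^-(u)$ lies in $S$, hence $v\in N^-[v]\subseteq N^-[S]$; and $u$ itself is covered because $w\in S$ together with $(u,w)\in A$ forces $u\in N^-(w)\subseteq N^-[S]$. Now $S$ and $S\cup\{u\}$ are distinct sets with the same closed in-neighbourhood, so by Lemma~\ref{basic-lema} they cannot both have size at most $\ell$; as $S\subsetneq S\cup\{u\}$ this means $|S\cup\{u\}|=|S|+1>\ell$, i.e.\ $\ell\le|S|$. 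Finally $|S|=|\{w\}\cup N^-(u)|\le d^-(u)+1$, which yields $\ell\le d^-(u)+1$.

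For the digon refinement, if $u$ belongs to a digon I would take $w$ to be a digon partner, so that $w\in N^+(u)\cap N^-(u)$. Then $w$ already lies in $N^-(u)$, the set $S=\{w\}\cup N^-(u)$ collapses to $N^-(u)$, and the identical argument gives $\ell\le|S|=d^-(u)$. I expect the only delicate points to be bookkeeping ones: ensuring $u\notin S$ so that the pair is really distinct (guaranteed by the absence of loops), and noting that any \emph{extra} vertices which $N^-[S]$ may contain beyond $N^-[u]$ are harmless, since the argument only ever compares $S$ with $S\cup\{u\}$ and never needs an exact description of $N^-[S]$. The single genuine idea — that one vertex $w$ with $(u,w)\in A$ allows $u$ to be ``covered from outside'' — is precisely what distinguishes the two regimes $d^-(u)+1$ and $d^-(u)$.
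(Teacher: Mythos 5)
Your proposal is correct and follows essentially the same route as the paper: the pair $S=\{w\}\cup N^-(u)$ and $S\cup\{u\}$ is exactly the paper's pair $Y=N^-(u)\cup\{v\}$ and $X=N^-(u)\cup\{u,v\}$, and the digon refinement is handled identically by letting the out-neighbour be a digon partner. You merely spell out in more detail why the two closed in-neighbourhoods coincide, which the paper leaves implicit.
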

\begin{proof}
Let $u\in V(D)$ be such that $d^+(u)\geq1$ and $v\in N^+(u)$. Then,
both sets $X=N^-(u)\cup\{u,v\}$ and $Y=N^-(u)\cup\{v\}$ have the same closed in-neighbourhood. Consequently, $\ell\leq d^-(u)+1$. Furthermore,
if $v\in N^-(u)$, then $X'=N^-(u)\cup\{ u\}$ and $Y'=N^-(u)$ have the same closed in-neighbourhood implying that $\ell\leq d^-(u) $.
\end{proof}


\begin{corollary}
\label{nodig}
Let  $D$ be a  digraph  of minimum in-degree $\delta^- $ admitting a $(1,\leq \delta^-+1)$-identifying code.   Then, any vertex $u$ with $d^-(u)=\delta^-$ does not lay on a digon.
\end{corollary}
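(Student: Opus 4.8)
The plan is to obtain this as an immediate consequence of Proposition~\ref{BoundDelta+1}, arguing by contradiction. Suppose that $D$ admits a $(1,\le \delta^-+1)$-identifying code and that, contrary to the claim, some vertex $u$ with $d^-(u)=\delta^-$ lies on a digon. The first thing I would verify is that the hypotheses of Proposition~\ref{BoundDelta+1} are satisfied at $u$: since $u$ lies on a digon, there is a vertex $v$ with $(u,v),(v,u)\in A$, and in particular the arc $(u,v)$ certifies $d^+(u)\ge 1$, which is exactly the standing assumption of the proposition.

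With that in place, I would invoke the ``furthermore'' clause of Proposition~\ref{BoundDelta+1}, applied with $\ell=\delta^-+1$. Because $D$ admits a $(1,\le \delta^-+1)$-identifying code and $u$ belongs to a digon, that clause gives $\delta^-+1=\ell\le d^-(u)$. Substituting the hypothesis $d^-(u)=\delta^-$ turns this into $\delta^-+1\le \delta^-$, which is impossible. Hence no vertex of in-degree $\delta^-$ can lie on a digon, and the corollary follows.

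I do not expect any genuine obstacle here: the argument is purely mechanical once Proposition~\ref{BoundDelta+1} is available, and essentially amounts to reading off the contrapositive of its second assertion for the extremal value $\ell=\delta^-+1$. The only point deserving a moment's attention is the bookkeeping about out-degree, namely confirming that membership in a digon automatically supplies the condition $d^+(u)\ge 1$ required to enter the proposition; as noted above, this is immediate from the very definition of a digon at $u$.
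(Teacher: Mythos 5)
Your argument is correct and is exactly the intended derivation: the paper states this as an immediate consequence of Proposition~\ref{BoundDelta+1} without further proof, and your application of the ``furthermore'' clause with $\ell=\delta^-+1$ (after noting that a digon at $u$ gives $d^+(u)\ge 1$) is precisely that deduction.
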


\begin{corollary}
 \label{DeltaBound}
 Let $D$ be a digraph admitting a $(1,\leq \ell)$-identifying code. Then, $\ell \leq \min\{d^-(u)+1\mid u\in V(D)\mbox{ and } d^+(u)\geq1\}$.
\end{corollary}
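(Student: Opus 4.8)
The plan is to obtain this corollary as an immediate consequence of Proposition \ref{BoundDelta+1}, which already does all the substantive work. The point is simply that the proposition produces, for each individual vertex of positive out-degree, an upper bound on $\ell$, and these bounds hold simultaneously whenever $D$ admits a $(1,\leq\ell)$-identifying code. Taking the tightest of them yields the stated minimum.

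First I would set $S=\{u\in V(D)\mid d^+(u)\geq 1\}$ and fix an arbitrary vertex $u\in S$. Applying Proposition \ref{BoundDelta+1} to $u$ gives $\ell\leq d^-(u)+1$. Since $D$ admits a $(1,\leq\ell)$-identifying code, this single hypothesis makes the proposition applicable to \emph{every} vertex of $S$ at once, so the inequality $\ell\leq d^-(u)+1$ holds for all $u\in S$. Consequently $\ell$ is a common lower bound for the quantities $d^-(u)+1$ over $u\in S$, whence
\begin{equation*}
\ell\leq \min\{d^-(u)+1\mid u\in V(D)\mbox{ and } d^+(u)\geq1\},
\end{equation*}
as claimed.

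There is essentially no obstacle here; the only point worth a line is the degenerate case $S=\emptyset$, i.e.\ when $D$ has no arcs at all. In that situation $N^-[X]=X$ for every $X\subseteq V$, so all subsets are trivially distinguished and the bound is vacuous (the minimum over the empty set being interpreted as $+\infty$). Excluding or remarking on this case, the argument above is complete and amounts to little more than quantifying the per-vertex bound of Proposition \ref{BoundDelta+1} over all admissible vertices and passing to the minimum.
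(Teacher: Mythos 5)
Your argument is correct and is exactly the intended one: the paper states this corollary without proof as an immediate consequence of Proposition~\ref{BoundDelta+1}, applied to every vertex of positive out-degree and then minimized. Your extra remark about the degenerate case $S=\emptyset$ is harmless and does not change the substance.
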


We recall that a \emph{transitive tournament} of three vertices is denoted by $TT_3$, see $F_1$ of Figure~\ref{fig:conf-prohibides3}.
\begin{remark}
\label{TT3}
Let $D$ be a $TT_3$-free digraph. Then, for every arc $(x,y)$ of $D$, we have $N^-(x)\cap N^-(y)=\emptyset$ and $N^+(x)\cap N^+(y)=\emptyset$.
\end{remark}

\begin{remark}\label{twin}
Two distinct vertices  $u$ and $v$   of $D$ are called \emph{twins} if $N^-[u]=N^-[v]$.  Hence,  a digraph $D$ admits a $(1,\le 1)$-identifying code if and only if $D$ is twin-free.
\end{remark}

\begin{theorem}
\label{theo:ident-code-digraphs-altern}
Let $D$ be a twin-free digraph with minimum in-degree $\delta^-\ge 1$.
\begin{itemize}
\item[$(i)$] Suppose that $\delta^-\ge 2$ and $D$ does not  contain any subdigraph as $F_1$ nor $F_2$ of Figure~\ref{fig:conf-prohibides3}, then $D$ admits a $(1,\leq\ \delta^--1)$-identifying code.

\item[$(ii)$] Suppose that $D$ is an oriented graph and does not contain any subdigraph as $F_1$ nor $F_2$ of Figure~\ref{fig:conf-prohibides3}, then $D$ admits a $(1,\leq\ \delta^-)$-identifying code.

\item[$(iii)$]
If $D$ does not contain any subdigraph from $F_1$ to $F_9$ of Figure~\ref{fig:conf-prohibides3}, then $D$ admits a $(1,\leq \delta^-)$-identifying code.

\item[$(iv)$] Suppose that $\delta^-\ge 2$ and the vertices of in-degree $\delta^-$ do not lay on a digon. If $D$ does not contain any subdigraph as those of Figure~\ref{fig:conf-prohibides3}, then $D$ admits a $(1,\leq\delta^-+1)$-identifying code.

\item[$(v)$] Suppose that $\delta^-=1$ and the vertices of in-degree 1 do not lay on directed cycles of length less than five. If $D$ does not contain any subdigraph as $F_1$,  $F_3$, $F_4$, $F_5$, $F_6$ nor $F_{11}$ of  Figure~\ref{fig:conf-prohibides3}, then $D$ admits a $(1,\leq  2)$-identifying code.
\end{itemize}
\end{theorem}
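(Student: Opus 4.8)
The plan is to prove the five parts together by a single contrapositive argument using Lemma~\ref{basic-lema}: a twin-free digraph $D$ fails to admit a $(1,\le\ell)$-identifying code precisely when there exist distinct vertex subsets $X,Y$ with $|X|,|Y|\le\ell$ and $N^-[X]=N^-[Y]$. Since $D$ is twin-free, Remark~\ref{twin} already rules out the case $|X|=|Y|=1$, so in every part I may assume the offending pair $X,Y$ has at least one side of size $\ge 2$. The uniform strategy is: suppose for contradiction that $N^-[X]=N^-[Y]$ for some such distinct $X,Y$, set $W:=N^-[X]=N^-[Y]$, and analyze the structure forced on $D$ by the vertices lying in the symmetric difference $X\triangle Y$. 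Each vertex $u\in X\setminus Y$ satisfies $N^-[u]\subseteq W=N^-[Y]$, so its in-neighbours and $u$ itself must be covered by the closed in-neighbourhoods of $Y$; pushing this covering requirement down to the minimum in-degree vertices is what generates the forbidden configurations $F_1,\dots,F_{11}$.

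For part~$(v)$ specifically I would argue as follows. Assume $\delta^-=1$, that every in-degree-$1$ vertex lies on no directed cycle of length $<5$, that $D$ avoids $F_1,F_3,F_4,F_5,F_6,F_{11}$, and yet $D$ admits no $(1,\le 2)$-identifying code. Then there are distinct $X,Y$ with $|X|,|Y|\le 2$ and $N^-[X]=N^-[Y]$, and by twin-freeness at least one of them, say $X=\{a,b\}$, has size $2$. I would split into the cases $|Y|=1$ and $|Y|=2$ and, within each, further split according to how many of the distinguished vertices have in-degree exactly $1$ versus $\ge 2$. The core of the argument is a short chase: writing down that each vertex of $X\triangle Y$ has its closed in-neighbourhood absorbed by the other side, and then reading off small subdigraphs on the vertices $a,b$, their in-neighbours, and the elements of $Y$. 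Exactly these small subdigraphs are the excluded $F_i$, except in the situation where one of the distinguished vertices has in-degree $1$; there the in-neighbour relation forces that vertex onto a short directed cycle (of length $2$, $3$, or $4$), which is precisely what the girth-type hypothesis on in-degree-$1$ vertices forbids. In every branch I reach a contradiction, so $D$ must in fact admit a $(1,\le 2)$-identifying code.

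I expect the main obstacle to be the bookkeeping in the in-degree-$1$ case, where the usual counting on minimum-degree vertices that works for $\delta^-\ge 2$ degenerates. When a distinguished vertex $u$ has $d^-(u)=1$, its single in-neighbour cannot simultaneously satisfy all the covering and distinctness constraints unless $u$ participates in a short closed walk, so the combinatorial pigeonholing has to be replaced by a careful tracing of arcs among $a,b$, their unique or few in-neighbours, and $Y$. The delicate point is to show that every such trace either reproduces one of the six forbidden configurations or else closes up into a directed cycle of length at most four through an in-degree-$1$ vertex; ruling out intermediate hybrid cases (where part of the configuration looks like an $F_i$ and part like a short cycle) is where the proof will need the most care. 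Once the case division is set up correctly, each individual contradiction is routine, so the difficulty is organizational rather than conceptual: ensuring the list $F_1,F_3,F_4,F_5,F_6,F_{11}$ together with the short-cycle hypothesis is genuinely exhaustive.
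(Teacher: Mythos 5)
Your high-level strategy is the same as the paper's (contrapositive via Lemma~\ref{basic-lema}, pick a witness in $X\setminus Y$, and force forbidden configurations from the covering condition $N^-[X]=N^-[Y]$), but as written the proposal is a plan rather than a proof: essentially all of the content of the theorem is deferred. The central mechanism you never articulate is an \emph{injectivity/counting} argument. For $(i)$ and $(ii)$ one takes $x\in X\setminus Y$ and assigns to each $v_i\in N^-(x)$ a vertex $y_i\in Y$ with $y_i\in N^+(v_i)$ or $y_i=v_i$; freedom from $F_1,F_2$ is exactly what makes the $y_i$ pairwise distinct, giving $|Y|\ge\delta^-$ (and $\ge\delta^-+1$ in the oriented case via an extra $y\in N^+(x)\cap Y$), which is the contradiction. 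For $(iii)$ the counting must happen on the \emph{other} side: one builds an injection from $N^-(x)$ into $X\setminus\{x\}$ via a two-step chase $N^-(x)\to Y\to X$, partitioning $N^-(x)$ into $N^-(x)\cap(Y\setminus X)$, $N^-(x)\cap X$, and $N^-(x)\setminus(X\cup Y)$; the configurations $F_3$--$F_9$ are precisely what rule out collisions between the three parts. Your description (``its in-neighbours must be covered by the closed in-neighbourhoods of $Y$'') only captures the one-step version and would not yield $(iii)$. Part $(iv)$ is not addressed at all: it requires first extracting from the $(iii)$-argument that $X=\{z_1,\dots,z_{\delta^-},x\}$ with $d^-(x)=\delta^-$, then using the no-digon hypothesis on minimum in-degree vertices together with $F_{10},F_{11}$ in a further chase.

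For $(v)$ your outline is closer to the paper's proof (rule out $|Y|=1$, show the arc from $x$ into $Y$ is not on a digon, then split on $X\cap Y\ne\emptyset$ versus $X\cap Y=\emptyset$, with the girth-type hypothesis supplying the contradiction when an in-degree-$1$ vertex would be forced onto a $2$-, $3$- or $4$-cycle), and you correctly identify the role of the short-cycle hypothesis. But you concede that the real difficulty is ``ensuring the list $F_1,F_3,F_4,F_5,F_6,F_{11}$ together with the short-cycle hypothesis is genuinely exhaustive'' --- that exhaustiveness \emph{is} the theorem. A key tool you would need to isolate is the paper's Claim~\ref{DigoNo} (for every arc $(u,v)$ there is $w\in N^-(u)\setminus N^-[v]$), which packages the hypothesis into a usable separation statement and drives every branch of the case analysis. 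Without the explicit injective assignments in $(i)$--$(iv)$ and the worked case analysis in $(v)$, the argument has a genuine gap: nothing in the proposal verifies that the stated lists of forbidden subdigraphs are sufficient, which is the only nontrivial claim being made.
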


\begin{figure}[t]
\vskip-.75cm
    \begin{center}
  \includegraphics[width=12.5cm]{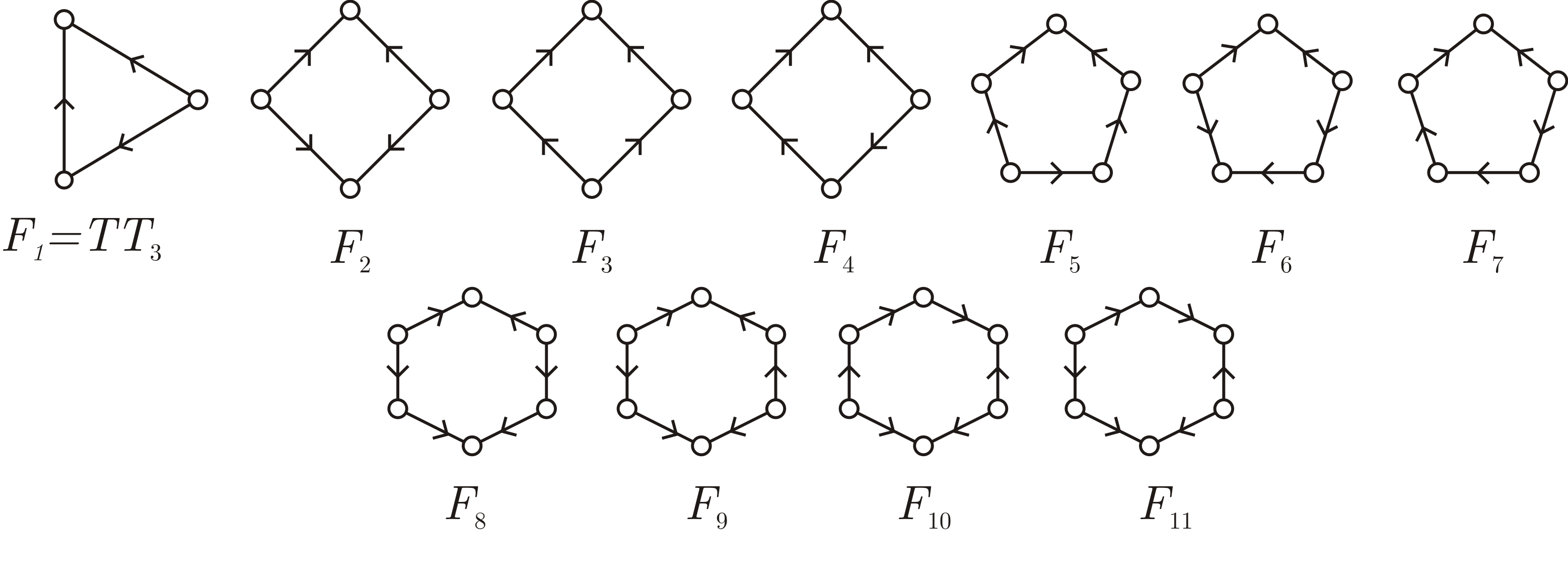}
  \end{center}
  \caption{All the forbidden subdigraphs of Theorem~\ref{theo:ident-code-digraphs-altern}.}
  \label{fig:conf-prohibides3}
\end{figure}


\begin{proof} By Remark \ref{twin},  $D$ admits a $(1,\leq\  1)$-identifying code because $D$ is twin-free. In what follows, for brevity, we  made reference to the different cases $F_1$-$F_{11}$
of Figure \ref{fig:conf-prohibides3} without mentioning the figure.

 We reason  assuming that   $D$ does not admit a $(1,\leq\ \ell )$-identifying code with $\ell \in  \{ \delta^--1  , \delta^-, \delta^-+1\}$.
Then there are two different subsets  $X$ and $Y$ with $|Y|, |X|\le \ell$ such that $N^-[X] = N^-[Y]$. Let $x\in X\setminus Y$ and  $N^-(x)=\{v_1,\ldots,v_{\tau}\}$ for $\tau\ge \delta^-$.
As $N^-(x)\subseteq N^-[X]=N^-[Y]$, for all $v_i$,  $i=1,\ldots,\tau$, there exists a vertex $y_i\in Y$ such that $y_i\in N^+(v_i)$ or $y_i=v_i\in Y$. Moreover, all vertices $y_i$ are mutually different, since otherwise some subdigraph $F_1$ or $F_2$ would be contained in $D$.
Hence, $|Y|\ge \delta^-$, which contradicts the hypothesis of $(i)$, and the proof of $(i)$ is completed.

Observe that both $(ii)$ and $(iii)$ are proved if  $\delta^-=1$, so we may assume that $\delta^-\ge 2$ in these two cases.

We continue the proof assuming that $\ell \in  \{   \delta^-, \delta^-+1\}$.
Since $x\in  N^-[X]=N^-[Y]$,  there is $y\in Y$ such that $y\in N^+(x)$. Observe that
 $y\not\in N^-(x)$ because by hypothesis of $(ii)$ the digraph is an oriented graph. Moreover, $y$ is different from each $y_i$ because $D$ is free of $F_1$, implying that $|Y|\ge\delta^-+1$, which   contradicts  the hypothesis of $(ii)$, and the proof of $(ii)$ is completed.

Next, to see $(iii)$ let us show that $|X|\ge \delta^-+1$.
To do that, let us see that for each $v_i\in N^-(x)$ one can associate to it a vertex $z_i\in X\setminus\{x\}$ in such a way that $z_i\neq z_j$ for all $i\neq j$. Let us consider the following partition of $N^-(x)$: $N^-(x)\cap (Y\setminus X)$, $N^-(x)\cap X$ and  $N^-(x)\cap (V\setminus {(X\cup Y}))$.
We have the following cases (see Figure \ref{theo:ident-code-digraphs-altern}):

\noindent Case 1: $v_i\in N^-(x)\cap (Y\setminus X)$. Since $\delta^-\ge 2$, there is $w_i\in  N^-(v_i)\setminus\{x\}\subseteq N^-[Y]\setminus\{x\}=N^-[X]\setminus\{x\}$. Hence: If $w_i\in X$, then $z_i=w_i$ and $z_i\neq x$; and if $w_i\not\in X$, since $w_i\in N^-[Y]=N^-[X]$, there exists $z_i\in X$ such that $z_i\in N^+(w_i)$.
     In this case we may assume that $z_i\neq x$, because $D$ is free of $F_1$.

\noindent Case 2: $v_i\in N^-(x)\cap X$. Then $z_i=v_i$ and $z_i\neq x$.

\noindent Case 3: $v_i\in N^-(x)\cap(V\setminus (X\cup Y))\subseteq N^-[X]\setminus (X\cup Y)=N^-[Y]\setminus (X\cup Y)$.  Then we consider the vertices  $y_i\in Y$ such that $y_i\in N^+(v_i)$  and $y_i\ne y_j$ for $i\ne j$. If $y_i\in X$, then $z_i=y_i$, and $y_i\neq x$ because $x\in X\setminus Y$. If $y_i\in Y\setminus X$, then there exists $z_i\in X$ such that $z_i\in N^+(y_i)$. Observe that $z_i$ is different from $x$, because  $D$ is free of $F_1$.
 \begin{figure}[t]
\vskip-.75cm
    \begin{center}
  \includegraphics[width=10cm]{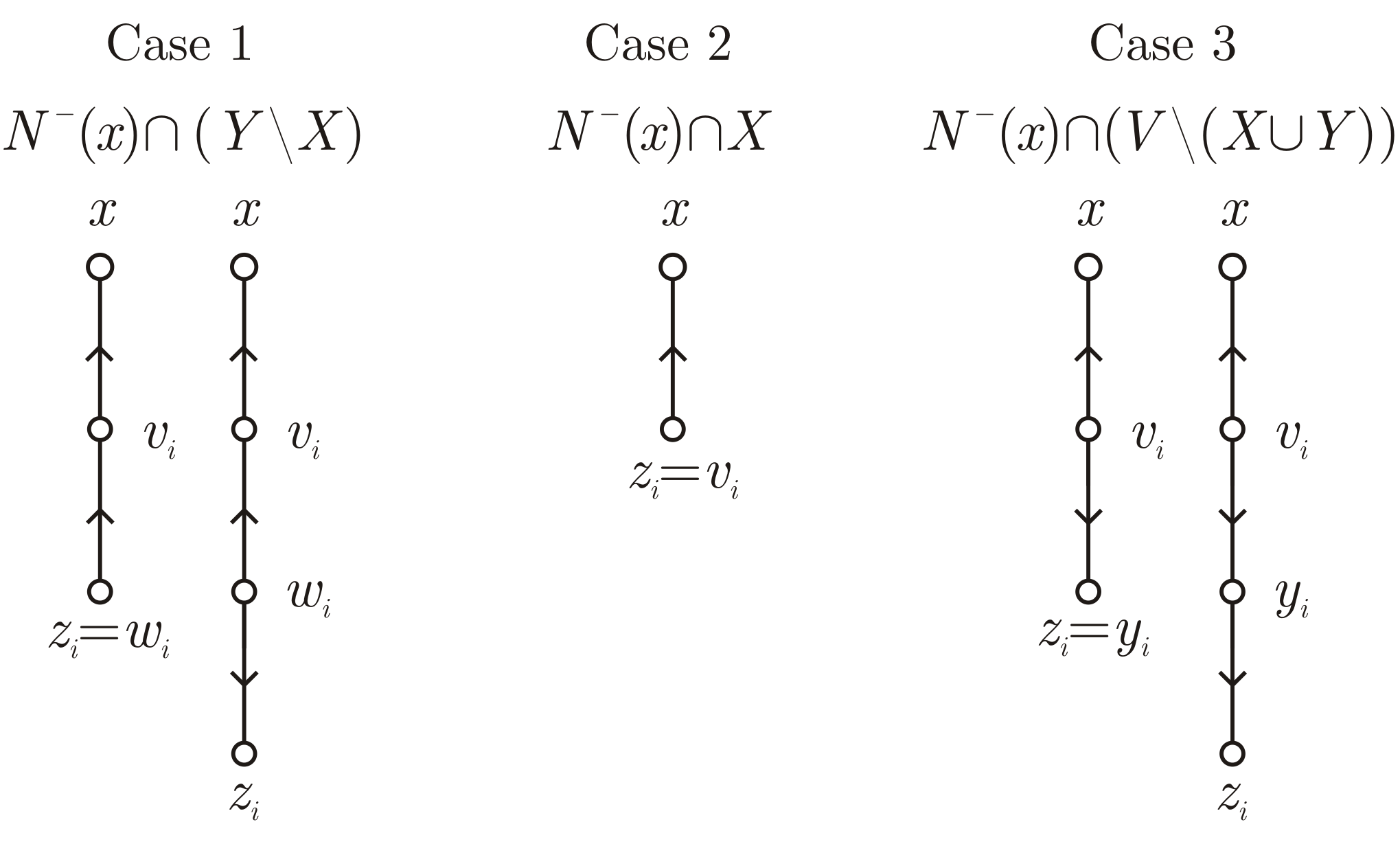}
  \end{center}
  \caption{All the cases in the proof of $(iii)$ of Theorem~\ref{theo:ident-code-digraphs-altern}.}
  \label{fig:camins}
\end{figure}

Now let us see that all $z_i$ are different. For this, let $i,j\in\{1,\dots,\tau\}$ such that $i\neq j$.
 If $v_i,v_j\in N^-(x)\cap (Y\setminus X)$ and $z_i=z_j$, then (see Figure \ref{fig:camins} Case 1) it could be $w_j=z_j=z_i=w_i\in X$, and  $D$ would contain the subdigraph $F_3$, contradicting the hypothesis of $(iii)$. It could be $z_j=z_i=w_i\in X$ and $w_j\not\in X$, then $D$ would contain the subdigraph $F_5$, a contradiction.  Finally, it could be  $w_i,w_j\not\in X$, $z_i=z_j$ and $z_i\in N^+(w_i)\cap N^+(w_j)$, then $D$ would contain the subdigraph $F_8$, a contradiction.  Therefore,   all the $z_i$ are different in Case 1.
 If $v_i,v_j\in N^-(x)\cap X$ it is clear that $z_i\neq z_j$ in Case 2.
If $v_i,v_j\in N^-(x)\cap(V\setminus (X\cup Y))$ and $z_i=z_j$, then (see Figure \ref{fig:camins} Case 3) it could be $z_j=y_i\in X$, and  $D$ contains the subdigraph $F_6$. Hence, $y_i,y_j\in Y\setminus X$ and $D$ contains  the subdigraph $F_8$. Therefore, all the $z_i$ are different in Case 3.
It remains to prove that for all $i,j\in\{1,\dots,\tau\}$, with $i\ne j$, $z_i\ne z_j$ when $v_i$ and $v_j$ are in different partite subsets of the considered partition of $N^-(x)$.
 Thus, if $z_i=z_j$ for some $i\neq j$, with
 $v_i\in  N^-(x)\cap (Y\setminus X)$ and $v_j\in N^-(x)\cap X$, then $D$ contains one of the subdigraphs $F_1$  or $F_3$ (see Figure \ref{fig:camins} Cases 1 and 2); if $v_i\in  N^-(x)\cap (Y\setminus X)$ and $v_j\in N^-(x)\cap(V\setminus (X\cup Y))$, then $D$ contains one of the subdigraphs  $F_4$, $F_6$, $F_7$ or $F_9$ (see Figure \ref{fig:camins} Cases 1 and 3); and finally, if $v_i\in N^-(x)\cap X$ and $v_j\in N^-(x)\cap(V\setminus (X\cup Y))$, then $D$ contains one of the subdigraphs $F_1$ or $F_4$ (see Figure \ref{fig:camins} Cases 2 and 3).
In any case, we can conclude that $X$   has at least $\delta^-+1$ vertices, which is a contradiction   because $|Y|, |X|\le \delta^-$ in case $(iii)$,   and the proof of this case is completed.

To prove $(iv)$, we assume that $|X|= \delta^-+1$ and $  |Y|\le\delta^-+1$.
 Since by hypothesis $\delta^-\ge 2$, reasoning as in $(iii)$ it follows that $X=\{z_1,z_2,\dots,z_{\delta^-},x\}$ and $d^-(x)=\delta^-$. Hence, by hypothesis $x$ does not  lay on a digon.  Let  $y\in N^+(x)$ with $y\in Y\setminus N^-(x)$.   First, let us show that $y\in Y\cap X$. Suppose that $y\in Y\setminus X$.
 Observe that for all $u\in Y\setminus X$, it can be proved analogously that   $d^-(u)=\delta^-$.
   Since $\delta^-\geq2$, there is $z\in N^-(y)\setminus \{x\}$. Let us show that $z\not\in  X$. Otherwise, suppose $z\in X$, then $z=z_j$ for some $j=1,\ldots, \delta^-$.
 If $v_j\in N^-(x)\cap (Y\setminus X)$, then $D$ contains $F_4$ or $F_5$ (see Figure~\ref{fig:camins} Case 1);
    if $v_j\in N^-(x)\cap X$, then $D$ contains $F_1$ (see Figure~\ref{fig:camins} Case 2);
    and if $v_j\in N^-(x)\setminus (X\cup Y)$, then $D$ contains $F_3$ or $F_5$ (see Figure~\ref{fig:camins} Case 3). Therefore,   $z\notin X$.
     Hence, $z\in N^-(z_i)$ for some $i\in\{1,\dots,\delta^-\}$.
     If $v_i\in N^-(x)\cap (Y\setminus X)$, then $D$ contains $F_7$ or $F_{10}$;
if $v_i\in N^-(x)\cap X$, then $D$ contains $F_4$;
and if $v_i\in N^-(x)\setminus (X\cup Y)$, then $D$ contains $F_6$ or $F_{11}$, a contradiction.
This implies that $y\in X\cap Y$ as we claimed.  So $y=z_i$ for some $i=1,\ldots, \delta^-$.
Notice that if $v_i\notin N^-(x)\cap(Y\setminus X)$, then $x$ would be contained in a digon, or $D$ contains $F_1$ or $F_3$, a contradiction. If $v_i\in Y\setminus X$, then   reasoning for $v_i$ as for $x$, we obtain that every $t\in N^+(v_i)\cap X$ satisfies that $t\in X\cap Y$. However, $x\in N^+(v_i)\cap X$, but $x\not\in Y$, which is a contradiction and the proof of $(iv)$ is done.

 To prove $(v)$ we assume that $\delta^-=1$ and $|X|=2$.
Clearly, the following claims holds if $\delta^-\ge 2$; moreover, since there are no vertices of in-degree 1 laying  on a digon  and by Remark \ref{TT3}, the   claim follows.

\begin{claim}
\label{DigoNo}
Let $(u,v)\in A(D)$. Then, there is $w\in N^-(u)\setminus N^-[v]$.
\end{claim}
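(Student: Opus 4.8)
The plan is to reduce everything to the $TT_3$-freeness of $D$ together with a short dichotomy on whether $v$ is itself an in-neighbour of $u$. The crucial structural fact is that $F_1=TT_3$ is among the forbidden subdigraphs in case $(v)$, so Remark~\ref{TT3} applies to the arc $(u,v)$ and yields $N^-(u)\cap N^-(v)=\emptyset$. This single identity does most of the work: it guarantees that \emph{every} in-neighbour of $u$ already lies outside $N^-(v)$, so the only thing left to secure is an in-neighbour of $u$ that is moreover different from $v$ itself. Indeed, for $w\in N^-(u)$ the condition $w\in N^-(u)\setminus N^-[v]$ is equivalent to $w\neq v$, since $w\notin N^-(v)$ holds automatically by the identity.

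First I would note $N^-(u)\neq\emptyset$, because $\delta^-\ge 1$. Then I would split according to whether $v\in N^-(u)$. If $v\notin N^-(u)$, every $w\in N^-(u)$ satisfies $w\neq v$, and by the identity $w\notin N^-(v)$; hence any such $w$ lies in $N^-(u)\setminus N^-[v]$ and we are done. If instead $v\in N^-(u)$, then $(v,u)\in A(D)$, and together with the given arc $(u,v)$ this makes $\{u,v\}$ a digon, so $u$ lies on a directed cycle of length $2<5$. By the hypothesis of $(v)$, a vertex lying on such a short cycle cannot have in-degree $1$, so $d^-(u)\ge 2$; thus $u$ has an in-neighbour $w\neq v$, and again $w\notin N^-(v)$ by the identity, giving $w\in N^-(u)\setminus N^-[v]$.

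The only delicate point I anticipate is the digon case, where one must exclude the possibility that $u$'s unique in-neighbour coincides with $v$; this is exactly where the no-short-cycle assumption on in-degree-$1$ vertices enters, since it forbids $u$ from simultaneously having in-degree $1$ and lying on a digon. The same reasoning also explains the auxiliary remark that the claim holds unconditionally when $\delta^-\ge 2$: then $|N^-(u)|\ge 2$ forces an in-neighbour distinct from $v$ regardless of digons, and $TT_3$-freeness again places that in-neighbour outside $N^-(v)$.
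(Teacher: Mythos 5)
Your proof is correct and is essentially the argument the paper intends: Remark~\ref{TT3} gives $N^-(u)\cap N^-(v)=\emptyset$, so the only obstruction is $v$ itself being the sole in-neighbour of $u$, which the no-digon condition on in-degree-$1$ vertices rules out. The paper states this in one compressed sentence; your dichotomy on whether $v\in N^-(u)$ is just a careful unpacking of the same reasoning.
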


First observe that if $|Y|=1$, say $Y=\{y\}$, then $x\in N^-(y)$ and by Claim \ref{DigoNo}, there is $w\in N^-(x)\setminus N^-[y]$, implying that $N^-[X]\neq N^-[Y]$, a contradiction. Then $|Y|=|X|=2$.

Let $X=\{x,x'\}$, $x\in X\setminus Y$,  and $Y=\{y, y'\}$ with $y\in N^+(x)$. Let us prove that the arc $(x,y)$ is not on a digon. Otherwise, suppose that $(x,y), (y,x)\in A(D)$.
By Claim \ref{DigoNo}, there exist $w,z\in V(D)$ such that $z\in N^-(x)\setminus N^-[y]$, and $w\in N^-(y)\setminus N^-[x]$. Hence, $z\in N^-[y']$ and $w\in N^-[x']$.
If $z\notin Y$,  then  $z\ne y' $ and   $z\in N^-(y')$. Moreover, since $D$ is free of $F_1$, $y'\in N^-[x']\subset N^-[X]$. If $x'=y'$, then $w\ne x'$ because $D$ is free of $F_3$;
 $w\in N^-(x')$, implying that $D$ contains $F_6$, therefore $x'\neq y'$ (and so $y'\in Y\setminus X$).
  Moreover, we can assume that $w\notin\{y',x'\}$, otherwise $D$ contains $F_4$ or $F_5$. Thus, $w\in N^-(x')$ implying that $D$ contains $F_{11}$, concluding that If $z\in  Y$. Hence, let us assume that $Y=\{y,z\}$, and analogously  $X=\{x,w\}$.
  By Claim \ref{DigoNo}, there is $u\in (N^-(z)\setminus N^-[x])\cap N^-[w]$, because $N^-[Y]=N^-[X]$, then $D$ contains $F_3$ if $u=w$ or $F_5$ if $u\in N^-(w)$. Therefore,  the arc $(x,y)$ is not on a digon.

Suppose that $X\cap Y\neq\emptyset$. First assume that $X=\{x,x'\}$ and $Y=\{y,x'\}$. Taking into account that $N^-[Y]=N^-[X]$ we have $x\in N^-(y)\cup N^-(x')$ and $y\in N^-(x')$ because $(x,y)$ is not on a digon.
By Claim \ref{DigoNo} there is $w\in N^-(x)\setminus N^-[y]$ and  $w\in  N^-[x']$ (because  $N^-[X]=N^-[Y]$). If $w=x'$, then $(xyx'x)$ is a $3$-cycle in $D$, and by hypothesis there is $u\in N^-(x)\setminus\{x'\}$. By Remark \ref{TT3}, $u\notin N^-(y)\cup N^-(x')$, a contradiction. Then, $w\neq x'$, implying that $D$ contains $F_4$.
Secondly, assume that $X=\{x,y\}$ and $Y=\{y,y'\}$. By Claim \ref{DigoNo} there is $w\in N^-(x)\setminus N^-[y]$ and  $w\in  N^-[y']$. If $w=y'$, there is $w'\in N^-(y')\setminus N^-[x]$ by Claim \ref{DigoNo}, and $D$ would contain a $F_4$. Thus  $w\ne y'$ and $w\in  N^-(y')$, and since $y'\in N^-(x)\cup N^-(y)$ $D$ would contain a $F_1$ or $F_3$, a contradiction.

Suppose that $X\cap Y=\emptyset$. Let $X=\{x,x'\}$ and $Y=\{y,y'\}$. Then,  $y\in N^-(x')$, and since $y\in Y\setminus X$, reasoning for $y$ as for $x$, the arc $(y,x')$ is    like the arc $(x,y)$ and so it is not lying on a digon. Then $x'\in N^-(y')$  and similarly,
$y'\in N^-(x)$. By hypothesis there are no vertices of in-degree $1$  lying   on a 4-cycle, it follows that there is $z\in N^-(x)\setminus \{y'\}$, but by Remark \ref{TT3}, $N^-(x)\cap(N^-(y)\cup N^-(y'))=\emptyset$ implying that $N^-[X]\neq N^-[Y]$, a contradiction.
\end{proof}


If for each graph $G$, we consider its corresponding symmetric digraph $ \overset{\text{$\leftrightarrow $}}{G}$, obtained by replacing each edge $uv\in G$ by the arcs $(u,v)$ and $(v,u)$, then we obtain the following corollary from Theorem \ref{theo:ident-code-digraphs-altern}.
\begin{corollary}\label{graph} Let $G$ be a graph of girth $g$ and minimum degree $\delta\ge 2$. Then
\begin{enumerate}
\item If  $g\geq7$, then $G$ admits a $(1,\leq \delta)$-identifying code.
\item If  $g\geq5$, then $G$ admits a $(1,\leq \delta-1)$-identifying code.
\end{enumerate}
\end{corollary}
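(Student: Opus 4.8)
The plan is to pass to the symmetric digraph $\overset{\leftrightarrow}{G}$ and to read the two assertions off from Theorem~\ref{theo:ident-code-digraphs-altern}. First I would record the dictionary between $G$ and $\overset{\leftrightarrow}{G}$: since every edge $uv$ is replaced by the digon $(u,v),(v,u)$, one has $N^-(u)=N(u)$ and hence $N^-[U]=N[U]$ for every $U\subseteq V$, so that $d^-(u)=d(u)$ and $\delta^-(\overset{\leftrightarrow}{G})=\delta(G)=\delta\ge 2$. In particular a set $C$ meets the identifying condition~(\ref{code}) in $\overset{\leftrightarrow}{G}$ if and only if it meets the corresponding condition (with $N[\,\cdot\,]$) in $G$, so it is enough to produce the code in $\overset{\leftrightarrow}{G}$. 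I would also note that $\overset{\leftrightarrow}{G}$ is twin-free whenever $g\ge 4$: two distinct vertices with $N^-[u]=N^-[v]$ must be adjacent (otherwise $u\notin N^-[v]$), and then any neighbour $w\neq v$ of $u$, which exists since $\delta\ge 2$, would lie in $N^-[v]$ and create a triangle.

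The heart of the matter is to convert the girth hypothesis into the absence of the forbidden subdigraphs, for which I would use the following principle. Because $\overset{\leftrightarrow}{G}$ is symmetric, a prescribed subdigraph $F$ embeds into $\overset{\leftrightarrow}{G}$ exactly when the underlying undirected graph $H$ of $F$ (obtained by forgetting orientations and collapsing each digon to one edge) embeds into $G$; consequently, if $H$ contains a cycle, then the appearance of $F$ in $\overset{\leftrightarrow}{G}$ forces $g(G)\le g(H)$. Reading the configurations off from the proof of part~(iii), each $v_i\in N^-(x)$ is joined to its partner $z_i\in X$ by one of the directed walks $x\leftarrow v_i\leftarrow w_i\to z_i$, $x\leftarrow v_i=z_i$, or $x\leftarrow v_i\to y_i\to z_i$, and each $F_k$ with $3\le k\le 9$ arises by glueing two such walks at a shared endpoint $z_i=z_j$. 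The longest closed walk so produced, from two walks of the first kind, is $x\,v_i\,w_i\,z\,w_j\,v_j\,x$, of length $6$; while $F_1=TT_3$ underlies a triangle and $F_2$ (two vertices with two common out-neighbours) underlies a $4$-cycle. Hence the underlying graph of $F_1$ has girth $3$, that of $F_2$ girth at most $4$, and those of $F_3,\dots,F_9$ girth at most $6$.

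Both assertions now follow. If $g\ge 5$, then $\overset{\leftrightarrow}{G}$ contains neither $F_1$ nor $F_2$, is twin-free, and has $\delta^-=\delta\ge 2$, so part~(i) of Theorem~\ref{theo:ident-code-digraphs-altern} furnishes a $(1,\le\delta-1)$-identifying code, which is the second assertion. If $g\ge 7$, then $\overset{\leftrightarrow}{G}$ avoids all of $F_1,\dots,F_9$, and part~(iii) furnishes a $(1,\le\delta)$-identifying code, which is the first assertion.

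The step I expect to be the main obstacle is the cycle-length bookkeeping of the middle paragraph. For each of $F_3,\dots,F_9$ one has to confirm that the vertices of the glued walk are genuinely distinct, so that the closed walk is an honest cycle of the claimed length and not a shorter one (nor a degenerate configuration carrying a loop). Distinctness of $x$ from the interior vertices is immediate because $x\in X\setminus Y$ and $\overset{\leftrightarrow}{G}$ has no loops, while distinctness among the interior vertices $v_i,w_i,v_j,w_j,y_i,\dots$ is exactly what a large girth provides; so the argument is self-consistent, but it must be checked configuration by configuration once the precise pictures of $F_1,\dots,F_9$ are fixed.
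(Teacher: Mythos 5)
Your proposal is correct and follows exactly the route the paper intends (the paper states the corollary without writing out a proof, merely pointing to $\overset{\text{$\leftrightarrow $}}{G}$ and Theorem~\ref{theo:ident-code-digraphs-altern}): pass to the symmetric digraph, observe that twins force a triangle and that each forbidden configuration $F_1$, $F_2$ (resp.\ $F_1,\dots,F_9$) forces a cycle of length at most $4$ (resp.\ $6$) in the underlying graph, and invoke parts $(i)$ and $(iii)$. Your cycle-length bookkeeping for the glued walks matches the configurations used in the paper's proof of part $(iii)$, so no further checking is needed.
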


Observe that Theorem~\ref{laiho} by Laihonen is a consequence of Corollary~\ref{graph}.


\section{1-in-regular  and 2-in-regular digraphs}

In this section, we characterize  the  $d$-in-regular digraphs admitting a $(1,\leq d)$-identifying code and a  $(1,\leq d+1)$-identifying code for $d=1,2$. Recall that by Proposition \ref{BoundDelta+1}, if $D$ is a $d$-in-regular digraph admiting a $(1,\leq\ell)$-identifying code, then $\ell\leq d+1$. We start by giving a characterization of  1-in-regular digraphs admitting a $(1,\leq 2)$-identifying code.
 Observe that every  1-in-regular digraph $D$ admits a $(1,\leq 1)$-identifying code if and only if $D$ does not contain digons.

\begin{theorem}
\label{prop-cicle} Every  1-in-regular digraph $D$ admits a $(1,\leq 2)$-identifying code if and only if the girth of $D$ is at least 5.
\end{theorem}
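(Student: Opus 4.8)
The plan is to prove both directions by analyzing the structure of a $1$-in-regular digraph $D$, relying on Lemma~\ref{basic-lema} to reduce ``admits a $(1,\le 2)$-identifying code'' to the statement that $N^-[X]\ne N^-[Y]$ for all distinct $X,Y$ with $|X|,|Y|\le 2$. The key structural observation is that in a $1$-in-regular digraph every vertex $v$ has a unique in-neighbour, so $N^-[v]=\{v,p(v)\}$ where $p(v)$ denotes that unique predecessor; thus closed in-neighbourhoods are extremely rigid, and the whole argument becomes a careful comparison of two-element sets of the form $\{v,p(v)\}$.

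For the forward (contrapositive) direction I would show that if the girth is at most $4$, then $D$ fails the Lemma~\ref{basic-lema} criterion. A girth-$1$ configuration is impossible since there are no loops, so I would treat girth $2$, $3$, and $4$ separately. If $D$ has a digon $(u,v),(v,u)$, then by $1$-in-regularity $N^-[u]=\{u,v\}=N^-[v]$, so $u,v$ are twins and already $X=\{u\}$, $Y=\{v\}$ collide; this recovers the remark preceding the theorem. For a directed triangle $x_1x_2x_3x_1$, taking $X=\{x_1,x_2\}$ and $Y=\{x_2,x_3\}$ (or a similar pairing dictated by the unique-predecessor structure) I expect $N^-[X]=N^-[Y]$ because the closed in-neighbourhoods wrap around the cycle and overlap completely. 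The same idea, with one more vertex, handles a directed $4$-cycle. The point in each case is that short cycles make the map $v\mapsto\{v,p(v)\}$ fail to be injective on pairs, so I would exhibit the explicit colliding pair $X,Y$ for each girth value in $\{2,3,4\}$.

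For the converse I assume $D$ has girth at least $5$ and must verify $N^-[X]\ne N^-[Y]$ for all distinct $X,Y$ of size at most $2$. Since girth $\ge 5$ forbids digons, $D$ is in particular twin-free, so all singleton-versus-singleton and singleton-versus-pair comparisons that reduce to twins are settled; the genuine work is comparing two distinct pairs $X=\{a,b\}$ and $Y=\{c,d\}$. Here $N^-[X]=\{a,b,p(a),p(b)\}$ and $N^-[Y]=\{c,d,p(c),p(d)\}$, each a set of at most four vertices, and I would argue that if these coincide then $D$ contains a closed walk of length at most $4$, i.e. a directed cycle of length $\le 4$, contradicting the girth hypothesis. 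Concretely, equality of the two four-element sets forces a tight matching between the elements and their predecessors of $X$ and those of $Y$; chasing which vertex is whose predecessor produces a short directed cycle through the unique-predecessor relation.

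I expect the main obstacle to be the bookkeeping in this last step: the equation $\{a,b,p(a),p(b)\}=\{c,d,p(c),p(d)\}$ admits several combinatorial cases depending on how many of $a,b,c,d$ coincide and how the predecessors are identified with the displayed vertices, and one must check that every case that does \emph{not} immediately contradict $X\ne Y$ instead forces a directed cycle of length $2$, $3$, or $4$. The unique-predecessor property of $1$-in-regular digraphs is the crucial simplification that keeps this case analysis finite and tractable, since it pins down $p(v)$ uniquely for each $v$ and turns set equalities into a small system of identifications among at most eight symbols. Once the cases are organized by $|X\cap Y|$ and by the overlap pattern of predecessors, each branch closes by producing a short cycle, completing the contrapositive of the converse.
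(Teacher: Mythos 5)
Your proposal is correct, and the two halves of it relate to the paper differently. The forward direction is essentially the paper's argument: exhibit an explicit colliding pair of sets on each short cycle. One detail you left open does need care: on a directed $4$-cycle $v_1v_2v_3v_4v_1$ the adjacent pairs such as $\{v_1,v_2\}$ and $\{v_2,v_3\}$ do \emph{not} collide (their closed in-neighbourhoods are $\{v_1,v_2,v_4\}$ and $\{v_1,v_2,v_3\}$); the colliding pair is the two ``opposite'' pairs $X=\{v_1,v_3\}$, $Y=\{v_2,v_4\}$, which is exactly what the paper uses (for the triangle any two distinct $2$-subsets work, as you guessed). For the converse you take a genuinely different route. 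The paper does no new case analysis at all: it observes that $1$-in-regularity together with girth at least $5$ rules out the forbidden subdigraphs $F_1$, $F_3$, $F_4$, $F_5$, $F_6$ and $F_{11}$ of Figure~\ref{fig:conf-prohibides3} and then invokes Theorem~\ref{theo:ident-code-digraphs-altern}$(v)$. You instead argue directly from the unique-predecessor map $p$, showing that an equality $\{a,b,p(a),p(b)\}=\{c,d,p(c),p(d)\}$ between distinct pairs forces a digon, a $3$-cycle or a $4$-cycle; this bookkeeping does close in every branch (for instance, when $X\cap Y=\emptyset$ one is forced to $a=p(c)$, $b=p(d)$, $c=p(b)$, $d=p(a)$, which is precisely a $4$-cycle), and the singleton-versus-pair case is settled by cardinality ($|N^-[\{a,b\}]|\ge 3>2=|N^-[\{c\}]|$ once digons are excluded) rather than by twin-freeness as you stated. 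Your approach buys a short, self-contained proof that avoids the heavy machinery of Theorem~\ref{theo:ident-code-digraphs-altern}; the paper's buys uniformity, since the same forbidden-subdigraph framework also yields the other characterizations in the section.
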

\begin{proof}  Let $(u_1u_2u_3u_1)$  be  a directed triangle or $(v_1v_2v_3v_4v_1)$ a 4-cycle in $D$, then the sets $X_1=\{u_1,u_3\}$, $Y_1=\{u_2,u_3\}$, $X_2=\{v_1,v_3\}$ and $Y_2=\{v_2,v_4\}$ are such that $N^-[X_i]= N^-[Y_i]$, for $i=1,2$. Therefore, if $D$ contains a $k$-cycle, for some $k=2,3$ or $4$, then $D$ does not admit a $(1,\leq2)$-identifying code.
Conversely, suppose that the girth of $D$ is at least 5. Since $D$ is $1$-in-regular it follows that $D$ does not contain  any  subdigraph isomorphic to $F_1$, $F_3$, $F_4$, $F_5$, $F_6$ nor $F_{11}$ of Figure~\ref{fig:conf-prohibides3}, then by Theorem~\ref{theo:ident-code-digraphs-altern}, $D$ admits a $(1,\leq  2)$-identifying code. This completes the proof.
\end{proof}


%

The following result gives a complete characterization of all 2-in-regular digraphs admitting a $(1,\leq 1)$-identifying code and a characterization of all 2-in-regular digraphs admitting a $(1,\leq 2)$-identifying code.

{\begin{theorem} \label{char2} Let $D$ be a 2-in-regular 
digraph.
 \begin{itemize}
   \item[$(i)$] $D$ admits a $(1,\leq 1)$-identifying code if and only if it does not contain any subdigraph isomorphic to $H_1$ of Figure \ref{fig:conf-prohibides6}.
   \item[$(ii)$] $D$ admits a $(1,\leq 2)$-identifying code if and only if it does not contain any subdigraph isomorphic to one of the digraphs of Figure \ref{fig:conf-prohibides6}.
 \end{itemize}
\end{theorem}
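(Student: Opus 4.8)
The plan is to prove both directions of each characterization by connecting the general sufficient conditions of Theorem~\ref{theo:ident-code-digraphs-altern} (here with $\delta^-=2$) to the explicit forbidden configurations $H_i$ of Figure~\ref{fig:conf-prohibides6}. For the necessity (``only if'') directions I would argue contrapositively: assume $D$ contains a copy of the relevant forbidden subdigraph $H_i$ and exhibit, for each such $H_i$, two distinct vertex subsets $X,Y$ with $|X|,|Y|\le 1$ (for part $(i)$) or $|X|,|Y|\le 2$ (for part $(ii)$) such that $N^-[X]=N^-[Y]$; by Lemma~\ref{basic-lema} this shows $D$ admits no code of the stated type. For $(i)$ the configuration $H_1$ should be a twin-producing structure, so the witness is simply a pair of twins $X=\{u\}$, $Y=\{v\}$ with $N^-[u]=N^-[v]$, and the claim follows from Remark~\ref{twin}. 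For $(ii)$ each $H_i$ in Figure~\ref{fig:conf-prohibides6} should similarly admit an explicit pair of $2$-element sets with equal closed in-neighbourhoods, which I would list case by case.

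For the sufficiency (``if'') directions, the strategy is to reduce to the earlier theorem. For part $(i)$, since $D$ is $2$-in-regular it is twin-free exactly when it avoids $H_1$, so Remark~\ref{twin} gives the $(1,\le 1)$-identifying code immediately. For part $(ii)$, I would invoke Theorem~\ref{theo:ident-code-digraphs-altern}$(iii)$: a digraph with no subdigraph among $F_1,\dots,F_9$ admits a $(1,\le\delta^-)$-identifying code, and here $\delta^-=2$ gives a $(1,\le 2)$-code. The crux is therefore a translation lemma: in a $2$-in-regular digraph, the absence of \emph{all} the configurations $H_1,\dots$ of Figure~\ref{fig:conf-prohibides6} must be shown equivalent to (or at least to imply) the absence of $F_1$ through $F_9$. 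Because every vertex has in-degree exactly $2$, many of the $F_i$ collapse onto or force specific $H_j$; I would go through $F_1,\dots,F_9$ one at a time, and for each show that its presence in a $2$-in-regular digraph forces one of the listed $H_i$ to appear (using that the in-neighbourhoods are saturated, so extra in-arcs demanded by an $F_i$ pattern coincide with the existing two in-neighbours, completing the dense configurations $H_j$). Conversely I would confirm that each forbidden $H_i$ is itself destroyed by forbidding the $F_i$'s, so that the two lists cut out the same $2$-in-regular digraphs.

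The main obstacle I anticipate is precisely this equivalence of forbidden lists under the $2$-in-regularity constraint: the figures $F_1$--$F_9$ are drawn with minimum-in-degree hypotheses in mind and need not specify the in-degree of every vertex, whereas the $H_i$ are presumably exact $2$-in-regular gadgets. Making the correspondence airtight requires carefully checking that when a general configuration $F_i$ sits inside a $2$-in-regular host, the two available in-arcs at each relevant vertex can only be placed so as to realize one of the $H_j$, and that no $F_i$ can be avoided while still allowing some $H_j$. I would organize this as a finite case analysis keyed to the vertex in $X\setminus Y$ and its two in-neighbours, mirroring the three-case partition ($N^-(x)\cap(Y\setminus X)$, $N^-(x)\cap X$, $N^-(x)\cap(V\setminus(X\cup Y))$) used in the proof of Theorem~\ref{theo:ident-code-digraphs-altern}$(iii)$, so that each branch pins down the unique $H_j$ that can arise. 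Once that dictionary between $\{F_1,\dots,F_9\}$ and Figure~\ref{fig:conf-prohibides6} is established, both sufficiency statements follow by direct appeal to the earlier results and the necessity statements follow from the explicit witness sets.
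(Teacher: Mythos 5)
Your treatment of part $(i)$ and of the necessity (``only if'') directions matches the paper: $H_1$ produces twins, and each $H_i$ comes with explicit witness sets $X,Y$ of size at most $2$ with $N^-[X]=N^-[Y]$. The problem is the sufficiency direction of part $(ii)$. Your plan is to deduce it from Theorem~\ref{theo:ident-code-digraphs-altern}$(iii)$ via a ``translation lemma'' asserting that, in a $2$-in-regular digraph, avoiding all the $H_i$ of Figure~\ref{fig:conf-prohibides6} implies avoiding $F_1,\dots,F_9$. That implication is false, and the paper itself exhibits the counterexample: in the remark after Corollary~\ref{coro} it is pointed out that a $TT_3$-free $2$-in-regular oriented graph admitting a $(1,\le 2)$-identifying code (hence, by the necessity direction, containing no $H_i$) can still contain two vertices $u,v$ with $|N^-(u)\cap N^-(v)|=2$, i.e.\ a copy of $F_2$. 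More generally, Theorem~\ref{char2} is a genuine \emph{characterization}, strictly stronger than the sufficient condition of Theorem~\ref{theo:ident-code-digraphs-altern}$(iii)$ restricted to $2$-in-regular digraphs; if the two forbidden lists cut out the same class, the characterization would add nothing. So your case analysis ``$F_i$ present forces some $H_j$'' must fail at least at $F_2$ (and at $F_1=TT_3$ as well), and the reduction collapses.

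What the paper actually does for sufficiency is a self-contained argument: assuming $N^-[X]=N^-[Y]$ with $|X|=|Y|=2$, $x\in X\setminus Y$ and $N^-(x)=\{v_1,v_2\}$, it runs the same three-way partition of $N^-(x)$ that you cite (into $Y\setminus X$, $X$, and $V\setminus(X\cup Y)$), but in each of the resulting six cases it uses $2$-in-regularity to pin down the remaining in-arcs and derives a copy of one of $H_1,\dots,H_{13}$ directly --- it never routes through the $F_i$. Your organizational instinct (keying the cases to the partition of $N^-(x)$) is the right one, but you must carry out that case analysis against the $H_i$ themselves rather than trying to borrow the conclusion of Theorem~\ref{theo:ident-code-digraphs-altern}$(iii)$.
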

\begin{proof}
 In what follows, for brevity, we made reference to the different cases $H_1$-$H_{13}$ of Figure \ref{fig:conf-prohibides6} without mentioning the figure.   First note that any digraph with twins and minimum in-degree at least 2, necessarily contains $H_1$. Hence, the proof of $(i)$ follows by Remark \ref{twin}, because the vertices $x,y$ of $H_1$   are twins.  To prove $(ii)$, first let $X=\{x,x'\}$ (or $X=\{x\}$) and $Y=\{y, y'\}$. It is direct to check that $N^-[X]=N^-[Y]$ in each one of the digraphs shown in Figure \ref{fig:conf-prohibides6}. For the converse, we assume that $D$ does not contain any subdigraph isomorphic to the digraphs depicted in Figure \ref{fig:conf-prohibides6}, and $N^-[X]=N^-[Y]$ for   $X\ne Y$ such that $1\le |Y|\le |X|\le 2$.
According to $(i)$, $|X|=2$, consequently $3\le |N^-[X]|\le 6$. Notice that if $|Y|=1$, then $|N^-[Y]|=3$, and so $|N^-[X]|=3$ yielding that $D$ contains $H_1$. Therefore, we assume that $|Y|=|X|= 2$. Let $X=\{x,x'\}$ and $Y=\{y,y'\}$ with $x\in X\setminus Y$. Let $N^-(x)=\{v_1,v_2\}$ and $y\in Y$ such that $y\in N^+(x)$. As we did in the proof of Theorem \ref{theo:ident-code-digraphs-altern} we consider the different cases according to the partition of $N^-(x)$: $N^-(x)\cap (Y\setminus X)$, $N^-(x)\cap X$ and $N^-(x)\cap (V\setminus {(X\cup Y}))$.
\begin{figure}[t]
\vskip-.75cm
    \begin{center}
  \includegraphics[width=14cm]{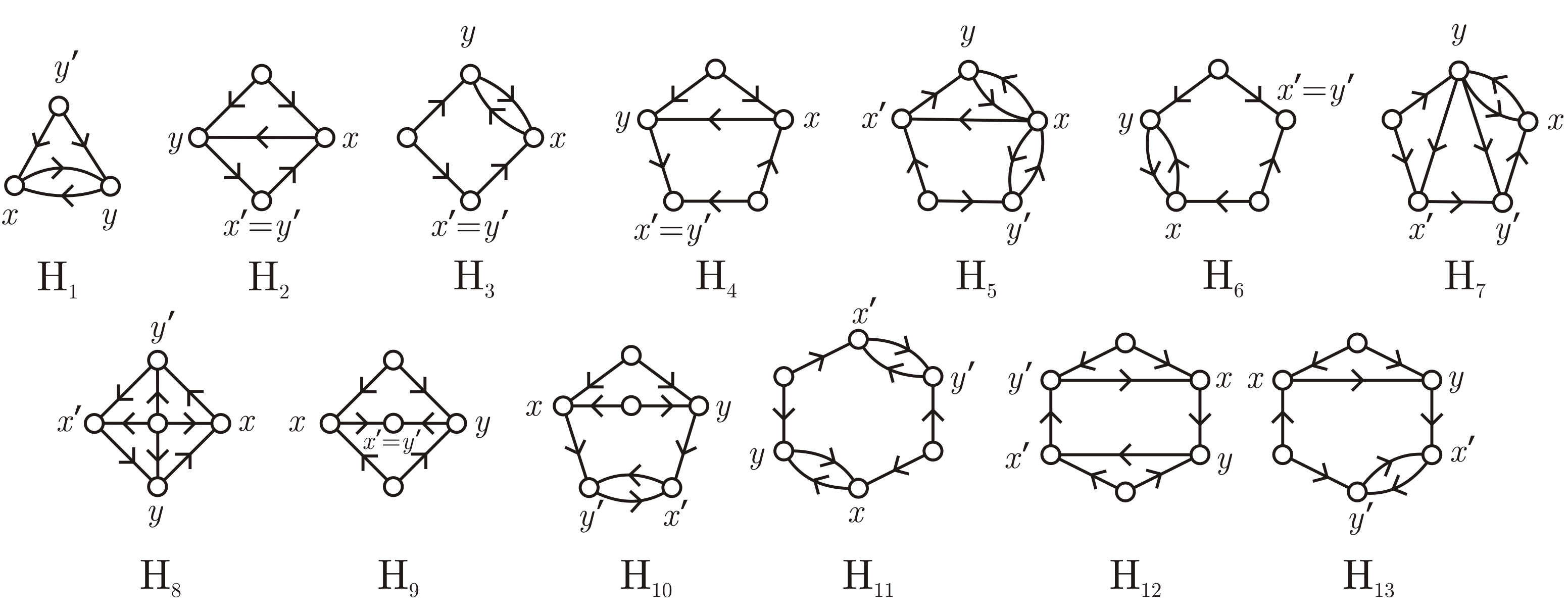}
  \end{center}
  \caption{The forbidden subdigraphs in a 2-in-regular digraph admitting a $(1,\le 2)$-identifying code.}
  \label{fig:conf-prohibides6}
\end{figure}

\noindent Case 1: Suppose that $v_1,v_2\in Y\setminus X$. Let $y=v_1$ and $y'=v_2$ and observe that in this case $x'\notin Y$. As $D$ is $H_1$-free and $H_3$-free, $(N^-(y)\setminus\{x\})\cap N^-[y']=\emptyset$  and  there is no arc between $y'$ and    $  N^-(y)\setminus\{x\}$. Let $w\in N^-(y)\setminus\{x\}$ and $w'\in N^-(y')\setminus\{x\}$, then $w,w'\in N^-[x']$.

Subcase 1.1: Suppose that $\{w,w'\}\cap\{x'\}=\emptyset$. Hence, $N^-(x')=\{w,w'\}$. Since $x'\in N^-[Y]$ it follows that   $x'\in N^-(y')$  implying that $D$ contains $H_{13}$, a contradiction.

Subcase 1.2: Suppose that $x'=w$. Hence, $w'\in N^-(x')$. If there is $z\in N^-(x')\setminus(X\cup Y\cup\{w'\})$, then $z\in N^-(y')$, implying that $D$ contains $H_{10}$, a contradiction. Therefore, $N^-[X]=X\cup Y\cup\{w'\}$, implying that $N^-(x')=\{w',x\}$ or $N^-(x')=\{w',y\}$. First suppose that $N^-(x')=\{w',x\}$. If $x\in N^-(y')$ then $D$ contains $H_5$ and if $y\in N^-(y')$, then $D$ contains $H_4$, a contradiction. Therefore, $N^-(x')=\{w',y\}$. If $x\in N^-(y')$ then $D$ contains $H_6$ and if $y\in N^-(y')$, then $D$ contains $H_5$, a contradiction.

Subcase 1.3: Supposse that $x'=w'$. Hence, $w\in N^-(x')$. If there is $z\in N^-(x')\setminus(X\cup Y\cup\{w'\})$, then $z\in N^-(y')$, implying that $D$ contains $H_{13}$, a contradiction. Therefore, $N^-[X]=X\cup Y\cup\{w'\}$. Hence, $N^-(x')=\{w,x\}$ or $N^-(x')=\{w,y\}$. First suppose that $N^-(x')=\{w,x\}$. If $x\in N^-(y')$ then $D$ contains $H_4$ and if $y\in N^-(y')$, then $D$ contains $H_9$, a contradiction. Therefore, $N^-(x')=\{w,y\}$. Hence, if $x\in N^-(y')$ then $D$ contains $H_4$ and if $y\in N^-(y')$, then $D$ contains $H_7$, a contradiction.

\noindent Case 2: Suppose that $v_1,v_2\in X$. Since $|X|=2$ this case is not possible.

\noindent Case 3: Suppose that $v_1,v_2\notin (X\cup Y)$. Since $x\in N^-(y)$, then $|N^-(y)\cap\{v_1,v_2\}|\leq1$ implying that $\{v_1,v_2\}\cap N^-(y')\neq\emptyset$. Without loss of generality suppose that $v_1\in N^-(y')$.

Subcase 3.1: If $y\in Y\setminus X$, then $y\in N^-(x')$. If $y'\in X\cap Y$, i.e. $y'=x'$, then $v_2\in N^-(y)$, implying that $D$ contains $H_4$. If $y'\in Y\setminus X$, then $N^-(x')=\{y,y'\}$ and $x'\in N^-(y)\cup N^-(y')$. If $x'\in N^-(y)$, then $v_2\in N^-(y')$, implying that $D$ contains $H_{10}$. And, if $x'\in N^-(y')$, then $v_2\in N^-(y)$, implying that $D$ contains $H_{13}$.

Subcase 3.2: If $y\in X\cap Y$ i.e. $x'=y$, then $y'\in N^-(y)$ and $v_1,v_2\in N^-(y')$, hence $D$ contains $H_9$, a contradiction. Therefore, the proof of Case 3 is finished.

\noindent Case 4: Suppose that $v_1\in Y\setminus X$ and $v_2\in X$, that is, $v_2=x'$.
Observe that if $v_1\in N^+(x)$, since $D$ is $H_1$-free, there is $w\in V(D)\setminus X$ such that $w\in N^-(v_1)\subset N^-[ Y] $. Thus,  $w\in N^-(x')$, implying that $D$ contains $H_3$, a contradiction. Then $v_1\not\in N^+(x)$ and so $v_1=y'$, and moreover $y\in N^-(x')$. If $x'\in N^+(x)$, then
  $N^-[X]=\{x,x',y,y'\}$, yielding that $y\in N^-(y')$, contradicting that $D$ is $H_3$-free.
Therefore, $N^+(x)\cap\{y',x'\}=\emptyset$ and recall that $y\in N^-(x')$.  Moreover, reasoning for $y$ as for $x$ in Case 1, we get that $x'\notin N^-(y)$. Moreover, if $y'\in N^-(y)$, then $D$ contains $H_2$, a contradiction. Therefore, there is $w\in N^-(y)\setminus (X\cup Y)$. Hence, $w\in N^-(x')$, implying that $D$ contains $H_2$, a contradiction.

\noindent Case 5: Suppose that $v_1\in Y\setminus X$ and $v_2\notin(X\cup Y)$.

Subcase 5.1: Suppose that $v_1\in  N^+(x)$, then, we can assume that $v_1=y$. Since $D$ is $H_1$-free, $v_2\in N^-(y')$ and there is $w\in V(D)\setminus\{x,v_2\}$ such that $N^-(y)=\{x,w\}$.   Observe that since $D$ is $H_3$-free, $v_2\notin N^-(w)$, then $w\neq y'$. Moreover, since $D$ is $H_6$-free, $w\notin N^-(y')$.  Hence, $w\in N^-[x']$, implying that $x'\neq y'$. Observe that reasoning for $y$ as for $x$ in Case 1, we get that $w\neq x'$. Then, $w\in N^-(x')$ and, since $x',y'\in N^-[X]=N^-[Y]$, it follows that $x'\in N^-(y')$ and $y'\in N^-(x')$, therefore $D$ contains $H_{11}$, a contradiction.

Subcase 5.2: Suppose that $v_1\notin   N^+(x)$, then $v_1=y'$ and $y\in N^-[x']$. First suppose that  $y=x'$. If $N^-(y')\subseteq X\cup\{v_2\}$, then  $N^-(y')=\{x',v_2\}$ implying that $D$ contains $H_2$. Hence, there is $w\in N^-(y')\setminus (X\cup\{v_2\})$. Then, $w\in N^-(x')$ and $v_2\in N^-(y')$, implying that $D$ contains $H_4$, a contradiction. Therefore, $y\ne x'$, implying that $y\in N^-(x')$. Reasoning for $y$ as for $x$ in Case 1 and Case 4 it follows that $N^-(y)\cap\{x',y'\}=\emptyset$. Then, $x'\in N^-(y')$. Moreover, since $v_2\in N^-(x)$, $v_2\in N^-(y)\cup N^-(y')$.
Also, reasoning for $x'$ as for $x$ in Case 1 and Case 4 it follows that $ N^-(x')\cap\{x,y'\}=\emptyset$. Hence, if $v_2\in N^-(y)\cap N^-(y')$, then $N^-[Y]=X\cup Y\cup\{v_2\}$, implying that $v_2\in N^-(x')$. Then, $D$ contains $H_{8}$, a contradiction. If $v_2\in N^-(y')\setminus N^-(y)$, then there is $z\in N^-(y)\setminus (X\cup Y\cup\{v_2\})$, implying that $N^-(x')=\{y,z\}$ and $D$ contains $H_{12}$. Analogously if $v_2\in N^-(y)\setminus N^-(y')$. And the proof of this case is completed.

\noindent Case 6: Suppose that $v_1\in X$ and $v_2\notin (X\cup Y)$. That is, $v_1=x'$. If $x'\in X\setminus Y$, then $y\in N^-(x')$. Since $y\in Y\setminus X$, reasoning for $x'$ as for $x$ in  Case 1, 4 and 5, we reach a contradiction. Hence, $x'\in X\cap Y$. If $x'=y$, then $y'\in N^-(x')$ and $v_2\in N^-(y')$, implying that $D$ contains $H_3$. Therefore, $x'\neq y$ and hence, $y\in Y\setminus X$. Since $x\in N^-(y)$, reasoning for $y$ as for $x$ in  Case 1, 4 and 5, we reach a contradiction.
\end{proof}

\begin{corollary}\label{coro} Every $TT_3$-free 2-in-regular oriented graph admits a $(1,\leq 2)$-identifying code if and only if it does not contain any subdigraph isomorphic to $H_9$ of Figure \ref{fig:conf-prohibides6}.

\end{corollary}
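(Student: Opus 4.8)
The plan is to read the statement off Theorem~\ref{char2}$(ii)$, which already identifies the $2$-in-regular digraphs admitting a $(1,\le 2)$-identifying code as precisely those avoiding each of $H_1,\dots,H_{13}$ of Figure~\ref{fig:conf-prohibides6}. The whole content of the corollary then lies in how the two extra hypotheses, namely being an \emph{oriented graph} (no digon) and being \emph{$TT_3$-free}, interact with these thirteen forbidden subdigraphs. The forward implication is immediate: if a $TT_3$-free $2$-in-regular oriented graph $D$ admits a $(1,\le 2)$-identifying code, then by Theorem~\ref{char2}$(ii)$ it contains none of the $H_i$, and in particular it is $H_9$-free.

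For the converse, let $D$ be a $TT_3$-free $2$-in-regular oriented graph not containing $H_9$. By Theorem~\ref{char2}$(ii)$ it suffices to show that $D$ contains no $H_i$ at all. Since $H_9$ is excluded by hypothesis, everything reduces to the single observation that, for each $i\in\{1,\dots,13\}$ with $i\ne 9$, the configuration $H_i$ contains either a digon or a copy of $TT_3$. Granting this, the conclusion is automatic: an oriented graph has no digon and a $TT_3$-free digraph has no $TT_3$, so $D$ cannot contain any $H_i$ with $i\ne 9$; together with $H_9$-freeness this means $D$ avoids all thirteen subdigraphs and hence admits the desired code.

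The heart of the proof, and the only step needing real work, is that observation, which I would settle by directly inspecting Figure~\ref{fig:conf-prohibides6} and sorting the twelve relevant pictures into two families. In the first family the configuration carries a digon: this is visible already for $H_1$, since by Remark~\ref{twin} its two twin vertices $x,y$ satisfy $N^-[x]=N^-[y]$, forcing both arcs $(x,y)$ and $(y,x)$; the same happens in every $H_i$ that arises in the proof of Theorem~\ref{char2}$(ii)$ from a hypothesis of the form $v_1\in N^+(x)$ together with the standing relation $v_1\in N^-(x)$. In the second family the configuration carries a $TT_3$: here I would exhibit an arc $(b,c)$ and a vertex of $N^-(b)\cap N^-(c)$, which by Remark~\ref{TT3} is exactly a transitive triangle. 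Organizing the twelve cases into these two families is a finite, if slightly tedious, bookkeeping task, and it is the place where care is required, since each picture's arcs must be read off correctly.

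Finally I would confirm that the corollary is genuinely about $H_9$ and not vacuous, by checking that $H_9$ itself is an oriented, $TT_3$-free digraph. In the notation of Case~3 of the proof of Theorem~\ref{char2}$(ii)$ (with $x'=y$), its arcs are $v_1\to x$, $v_2\to x$, $x\to y$, $y'\to y$, $v_1\to y'$ and $v_2\to y'$; no two of these are antiparallel, and no directed path of length two has its endpoints joined by an arc, so $H_9$ contains neither a digon nor a $TT_3$. This is exactly why $H_9$ survives the oriented and $TT_3$-free hypotheses and must be forbidden on its own.
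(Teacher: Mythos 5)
Your proposal is correct and follows exactly the route the paper intends: the corollary is stated as an immediate consequence of Theorem~\ref{char2}$(ii)$, the point being that every $H_i$ with $i\neq 9$ contains a digon or a $TT_3$ and so cannot occur in a $TT_3$-free oriented graph, while $H_9$ (the configuration $F_2$ together with a common out-neighbour of its two ``sink'' vertices, as the paper's remark after the corollary confirms) contains neither and must be excluded separately. Your identification of the forward direction as trivial, of the finite figure-inspection as the only substantive step, and of the non-vacuity check for $H_9$ all match the paper's (unwritten) argument.
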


Observe that Corollary \ref{coro} is an improvement  of Theorem \ref{theo:ident-code-digraphs-altern} $(ii)$ for 2-in-regular oriented digraphs. Now, the $TT_3$-free and 2-in-regular oriented graph  can have two distinct vertices $u,v$  with $|N^-(u)\cap N^-(v)|=2$,  that is, a subdigraph $F_2$ of Figure \ref{fig:conf-prohibides3}, but in this case there is no vertex $w\in V$ such that $u,v\in N^-(w)$.

In the following theorem we characterize the $2$-in-regular digraphs admitting a $(1,\le 3)$-identifying code.

\begin{theorem} \label{tres} Let $D$ be a 2-in-regular
digraph. Then  $D$ has a $(1,\leq 3)$-identifying code if and only if it   is a $TT_3$-free oriented graph, and does not contain any subdigraph isomorphic to one of the digraphs of Figure \ref{fig:conf-prohibides33}.
\end{theorem}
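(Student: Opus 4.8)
The plan is to use Lemma~\ref{basic-lema} and reduce everything to the study of pairs of distinct subsets $X\neq Y$ with $1\le|X|,|Y|\le 3$ and $N^-[X]=N^-[Y]$: the digraph $D$ admits a $(1,\le 3)$-identifying code precisely when no such pair exists. Two features of the hypothesis would be used constantly. First, $2$-in-regularity fixes $|N^-(u)|=2$ for every $u$, so that as soon as an arc $(u,v)$ is recorded, $N^-(v)$ consists of $u$ and exactly one further vertex; this makes the local completions finite and forces the appearance of concrete configurations. Second, once $D$ is $TT_3$-free, Remark~\ref{TT3} applies along every arc $(u,v)$, giving $N^-(u)\cap N^-(v)=\emptyset$ and $N^+(u)\cap N^+(v)=\emptyset$, which is exactly what lets one conclude that the vertices produced in the case analysis are pairwise distinct.

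For the necessity (\emph{only if}) I would argue the contrapositive for each condition. That $D$ must be an oriented graph is immediate from Corollary~\ref{nodig}: here $\delta^-=2$ and every vertex has in-degree $\delta^-$, so a vertex on a digon would violate the bound $\ell\le\delta^-$ for a $(1,\le\delta^-+1)=(1,\le 3)$-code. For each subdigraph of Figure~\ref{fig:conf-prohibides33} I would exhibit an explicit pair $X\neq Y$ with $|X|,|Y|\le 3$ and $N^-[X]=N^-[Y]$, read directly off the picture as in the opening lines of the proof of Theorem~\ref{char2}(ii). The necessity of $TT_3$-freeness is the delicate point and the step I expect to fight with: unlike a directed triangle, which immediately yields colliding sets (compare Theorem~\ref{prop-cicle}), a transitive triangle $a\to b\to c$, $a\to c$ does not by itself collide. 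Indeed, writing $N^-(a)=\{e,f\}$ and $N^-(b)=\{a,d\}$, its vertices have closed in-neighbourhoods $\{a,e,f\}$, $\{a,b,d\}$ and $\{a,b,c\}$, and the seven nonempty subsets of $\{a,b,c\}$ have pairwise distinct closed in-neighbourhoods. Hence the argument must exploit how $2$-regularity completes the neighbourhoods around the triangle, showing that a $TT_3$ forces either a pair of colliding sets of size at most three or one of the subdigraphs of Figure~\ref{fig:conf-prohibides33}; pinning this down cleanly, rather than by an unwieldy completion, is where I would spend most of the effort.

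For the sufficiency (\emph{if}) I would assume $D$ is a $TT_3$-free $2$-in-regular oriented graph avoiding every subdigraph of Figure~\ref{fig:conf-prohibides33}, take a hypothetical collision $X\neq Y$ with $|X|,|Y|\le 3$ and $N^-[X]=N^-[Y]$, and derive a contradiction. First I would bound the sizes: by Corollary~\ref{coro} a $TT_3$-free $2$-in-regular oriented graph already has a $(1,\le 2)$-identifying code unless it contains $H_9$ of Figure~\ref{fig:conf-prohibides6}, which is excluded by hypothesis, so no collision has $|X|,|Y|\le 2$ and we may take $|X|=3$. The case $|Y|=1$ is dispatched at once: then $|N^-[X]|=|N^-[Y]|=3$, forcing $N^-[X]=X$ and hence that the three vertices of $X$ induce a $2$-in-regular subdigraph, impossible in an oriented graph on three vertices. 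This leaves $|X|=3$ and $|Y|\in\{2,3\}$. Choosing $x\in X\setminus Y$ with $N^-(x)=\{v_1,v_2\}$ and $y\in Y\cap N^+(x)$, I would then run the case analysis of Theorem~\ref{char2}, splitting on the partition of $N^-(x)$ into $N^-(x)\cap(Y\setminus X)$, $N^-(x)\cap X$ and $N^-(x)\cap(V\setminus(X\cup Y))$, and tracing the arcs forced by $N^-[X]=N^-[Y]$. In every branch, Remark~\ref{TT3} forces the relevant vertices to be distinct and the relevant arcs to appear, and I would verify that each branch exhibits a subdigraph from Figure~\ref{fig:conf-prohibides33}, the desired contradiction.

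The main obstacle is twofold. On the sufficiency side, the case analysis for $\ell=3$ is much heavier than for $\ell=2$: the sets are larger, $|Y|$ may be $2$ or $3$, and one must control the interaction of $x$ with \emph{both} $v_1$ and $v_2$ at once, so the real difficulty is organising the branches and certifying that the list in Figure~\ref{fig:conf-prohibides33} is exhaustive---that no branch escapes it. On the necessity side, the genuinely subtle point, as noted above, is the necessity of $TT_3$-freeness, precisely because a transitive triangle is not a short directed cycle and so does not collide on its own; getting that implication right, and consistent with the exhaustiveness claim for Figure~\ref{fig:conf-prohibides33}, is the heart of the proof.
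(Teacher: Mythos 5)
Your outline matches the paper's broad architecture (explicit collisions for necessity, an exhaustive case analysis for sufficiency), but it leaves a genuine gap exactly at the point you flag as the ``heart of the proof'': the necessity of $TT_3$-freeness. This step is in fact not delicate at all, and your framing of it is misleading. If $D$ contains a $TT_3$ with $w\in N^-(u)\cap N^-(v)$ and $(u,v)\in A(D)$, let $z$ be the \emph{other} in-neighbour of $u$, so $N^-(u)=\{w,z\}$ by $2$-in-regularity. Then $N^-[u]=\{u,w,z\}\subseteq N^-[v]\cup\{z\}$, because both $u$ and $w$ lie in $N^-(v)$; hence $X=\{z,u,v\}$ and $Y=\{z,v\}$ satisfy $N^-[X]=N^-[Y]$, and no subdigraph of Figure~\ref{fig:conf-prohibides33} is needed. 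Your computation that the seven nonempty subsets of the triangle's vertex set are pairwise separated is correct but beside the point: nothing restricts $X$ and $Y$ to the triangle, and the trick is to put the second in-neighbour $z$ of the middle vertex into \emph{both} sets. As written, your proposal asserts that a $TT_3$ ``does not by itself collide'' and defers the resolution to an unspecified completion argument, so this direction is not established.

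On the sufficiency side your reductions are sound (the $|Y|=1$ case, and the elimination of $|X|,|Y|\le 2$ via Corollary~\ref{coro} --- though note you still need the small observation that $J_1$-freeness implies $H_9$-freeness, since the hypothesis only excludes the subdigraphs of Figure~\ref{fig:conf-prohibides33}). But the entire case analysis, which is the bulk of the proof, is deferred, and the organisation you propose differs from the paper's: you would fix one $x\in X\setminus Y$ and partition $N^-(x)$ as in Theorem~\ref{char2}, whereas the paper first proves several structural claims (e.g.\ that $N^-(a)\subseteq N^-[b]$ forces $N^-(a)=N^-(b)$ and $N^+(a)=\emptyset$, that $N^-[a]\subseteq N^-[b]\cup N^-[c]$ forces $a\in\{b,c\}$, and that the $x_i$ have pairwise distinct in-neighbourhoods), shows $|X|=|Y|=3$, and then branches on the number of arcs inside $X$ (from $0$ to $3$) together with the value of $|N^-[X]|$ (from $6$ to $9$). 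That global bookkeeping on $X$ is what makes the exhaustiveness of Figure~\ref{fig:conf-prohibides33} checkable; a single-vertex partition of $N^-(x)$ does not obviously control the interaction among all three vertices of $X$ and all three of $Y$, so the claim that ``each branch exhibits a subdigraph from Figure~\ref{fig:conf-prohibides33}'' remains unverified in your proposal.
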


\begin{figure}[t]
    \begin{center}
  \includegraphics[width=14cm]{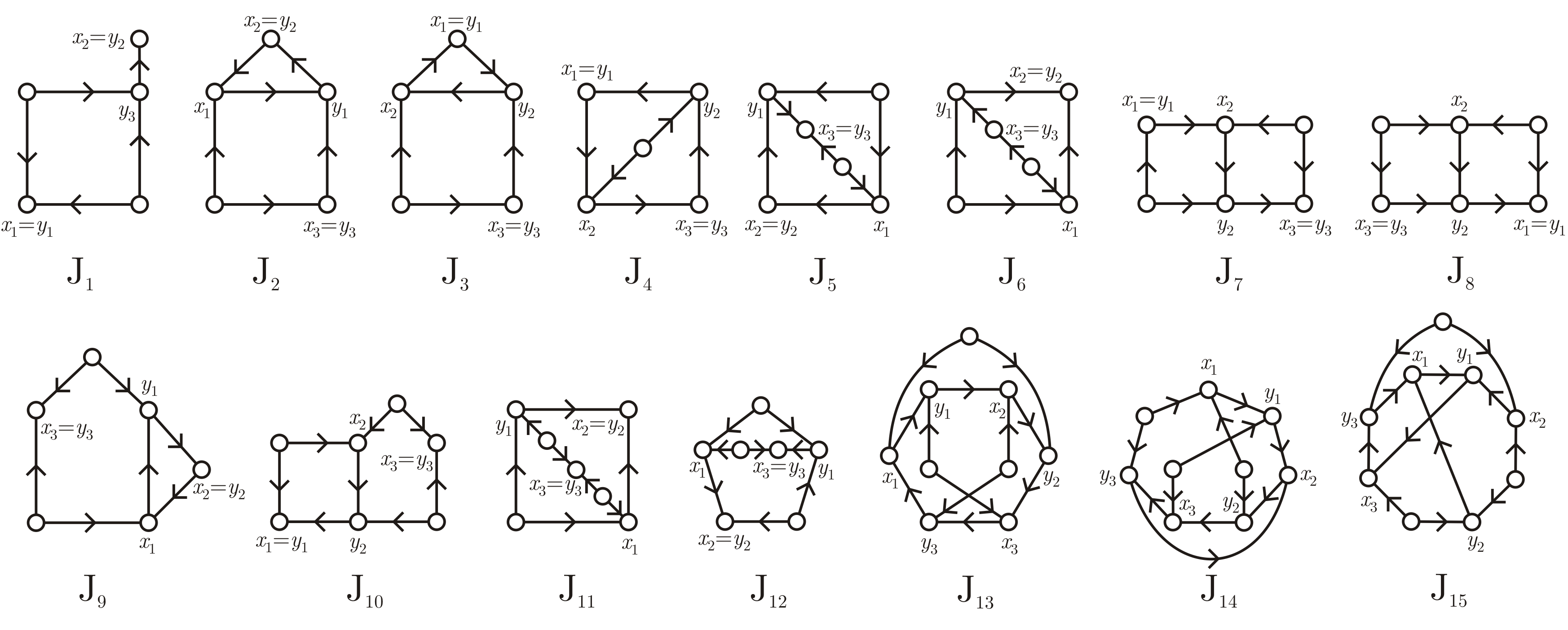}
  \end{center}
  \caption{All the forbidden subdigraphs of Theorem~\ref{tres}.}
  \label{fig:conf-prohibides33}
\end{figure}

\begin{proof}
By Proposition \ref{BoundDelta+1}, if $D$ contains a digon, then $D$ does not admit a $(1,\leq3)$-identifying code. Suppose that $D$ contains a $TT_3$, let say $w\in N^-(u)\cap N^-(v)$ and $(u,v)\in A(D)$, and let $z\in V(D)$ such that $N^-(u)=\{w,z\}$. Hence, the sets $X=\{z,u,v\}$ and $Y=\{z,v\}$ has the same closed in-neiborhood. Furthermore, for every digraph shown in Figure \ref{fig:conf-prohibides33} let $X=\{x_1,x_2,x_3\}$ (or $X=\{x_1,x_2 \}$) and $Y=\{y_1, y_2,y_3\}$. It is direct to check that $N^-[X]=N^-[Y]$ in each case. To the converse, we reason by contradiction. Let $D$ be a $TT_3$-free oriented graph without the  subdigraphs of Figure \ref{fig:conf-prohibides33}. Let $X,Y\subseteq V(D)$, $X\ne Y$,  with $N^-[X]=N^-[Y]$ and such that $1\leq|X|\leq|Y|\leq3$.
Since $D$ does not contain a subdigraph isomorphic to  $J_1$ of Figure \ref{fig:conf-prohibides33}, then it does not contain a  subdigraph $H_9$ of Figure \ref{fig:conf-prohibides6}. By Corollary \ref{coro}, $D$ admits a $(1,\leq 2)$-identifying code. Hence, $|Y|=3$, $|N^-[Y]|\ge 6$ and $|X|\ge 2$. In what follows, for brevity, we always make reference to the different cases  $J_1$-$J_{15}$ of Figure \ref{fig:conf-prohibides33} without mentioning the figure. Let us prove the following claim.
\begin{claim}
 \label{A}
Let $a,b\in V(D)$, with $a\neq b$, be such that $N^-(a)\subseteq N^-[b]$. Then, $N^-(a)=N^-(b)$ and  $N^+(a)=N^+(b)=\emptyset$.
\end{claim}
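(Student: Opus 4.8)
The plan is to prove the two assertions separately: first the equality of the in-neighbourhoods, which is purely a consequence of $2$-in-regularity and $TT_3$-freeness, and then the vanishing of the out-neighbourhoods, which is where the forbidden subdigraphs $J_1$--$J_{15}$ enter.

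For the first part, write $N^-(a)=\{p,q\}$. Since $D$ is $2$-in-regular, $|N^-(a)|=|N^-(b)|=2$ and $|N^-[b]|=3$, so the inclusion $N^-(a)\subseteq N^-[b]$ leaves only two possibilities. If $b\in N^-(a)$, then the second in-neighbour $c$ of $a$ must lie in $N^-[b]\setminus\{b\}=N^-(b)$, so the arcs $(c,a),(b,a),(c,b)$ form a transitive tournament on $\{c,b,a\}$, that is, a copy of $F_1=TT_3$, contradicting $TT_3$-freeness. Hence $b\notin N^-(a)$, so $N^-(a)\subseteq N^-(b)$, and equality of cardinalities forces $N^-(a)=N^-(b)=\{p,q\}$. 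The same argument with $a$ and $b$ interchanged (the hypothesis being now symmetric) also rules out $a\in N^-(b)$, so $\{a,b,p,q\}$ induces the configuration $F_2$ with no arc between $a$ and $b$.

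For the second part it suffices, by the symmetry just obtained, to show $N^+(a)=\emptyset$. I would argue by contradiction, assuming $a\to t$ for some vertex $t$; here $t\notin\{a,b,p,q\}$, since $t=b$ would give $a\in N^-(b)=\{p,q\}$ and $t\in\{p,q\}$ would create a digon, both impossible. Let $s$ be the second in-neighbour of $t$ (it exists because $D$ is $2$-in-regular); then $s\neq a$ and $s\notin\{p,q\}$ by $TT_3$-freeness (for example $s=p$ would make $(p,a),(p,t),(a,t)$ a $TT_3$), while $s$ may equal $b$. The key observation is that $X=\{a,b,t\}$ and $Y=\{b,t\}$ have the same closed in-neighbourhood: indeed
\[
N^-[X]=N^-[a]\cup N^-[b]\cup N^-[t],\qquad N^-[Y]=N^-[b]\cup N^-[t],
\]
and $N^-[a]=\{a,p,q\}$ is contained in $N^-[Y]$ because $p,q\in N^-(b)$ and $a\in N^-(t)$. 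Consequently the subdigraph induced on $\{a,b,p,q,t,s\}$ is an obstruction to a $(1,\le 3)$-identifying code, and I would identify it with one of the forbidden subdigraphs of Figure~\ref{fig:conf-prohibides33}, according to how $s$ attaches to $\{a,b,p,q,t\}$; in particular the degenerate case $s=b$, where $a$ and $b$ share the out-neighbour $t$, is exactly the copy of $H_9$ sitting inside $J_1$. This contradiction yields $N^+(a)=\emptyset$, and symmetry gives $N^+(b)=\emptyset$.

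The displayed equalities above are immediate; the genuine work, and the step I expect to be the main obstacle, is the final case analysis confirming that the six-vertex configuration $\{a,b,p,q,t,s\}$ is isomorphic to some $J_i$ for every admissible placement of the extra arcs incident with $s$ (arcs such as $a\to s$, $b\to s$, $p\to s$, or $q\to s$, subject to the oriented and $TT_3$-free constraints). Carrying this out amounts to reading each isomorphism type off Figure~\ref{fig:conf-prohibides33}, and it is precisely this enumeration that the list $J_1$--$J_{15}$ is designed to exhaust.
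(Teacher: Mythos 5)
Your first half is correct and is essentially the paper's own argument: if $b\in N^-(a)$ the second in-neighbour of $a$ lands in $N^-(b)$ and forms a $TT_3$, so $N^-(a)\subseteq N^-(b)$ and $2$-in-regularity forces equality. The problem is in the second half, where the actual conclusion $N^+(a)=N^+(b)=\emptyset$ is never established. You correctly observe that $X=\{a,b,t\}$ and $Y=\{b,t\}$ satisfy $N^-[X]=N^-[Y]$, but inside the proof of Claim~\ref{A} this is not yet a contradiction: the claim sits inside the converse direction of Theorem~\ref{tres}, where the standing hypotheses are only that $D$ is a $TT_3$-free oriented graph avoiding $J_1$--$J_{15}$; that $D$ admits a $(1,\le 3)$-identifying code is the conclusion being proved, so you may not argue ``this configuration is an obstruction, hence it must appear in Figure~\ref{fig:conf-prohibides33}.'' That inference presupposes exactly the exhaustiveness the theorem is asserting. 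The contradiction has to come from exhibiting a concrete forbidden subdigraph, and you explicitly defer that step (``I would identify it with one of the forbidden subdigraphs\dots the main obstacle''), so the proof is not complete as written.

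The deferred step is also being made harder than it is. The paper needs no case analysis on the second in-neighbour $s$ of $t$ and no six-vertex enumeration: the five-vertex configuration consisting of $a,b$ with $N^-(a)=N^-(b)=\{p,q\}$ together with a single out-neighbour $t$ of $a$ is itself (a copy of) $J_1$ — this is the minimal obstruction already isolated in Lemma~\ref{(c)}$(ii)$, and it is why the paper can note earlier that $J_1$-freeness implies $H_9$-freeness. So the paper's proof of the second assertion is one line: if $N^+(a)\ne\emptyset$ then $D$ contains $J_1$, contradiction. To repair your write-up, replace the enumeration over placements of $s$ by this single identification with $J_1$; everything else you prove about $t$ and $s$ (that $t\notin\{a,b,p,q\}$, that $s\notin\{a,p,q\}$) is correct but unnecessary.
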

\begin{proof}
If $b\in N^-(a)$, then   $D$ contains a $TT_3$, which is a contradiction. Hence, $N^-(a)=N^-(b)$ and  $N^+(a)=N^+(b)=\emptyset$, because otherwise  $D$ contains $J_1$.
\end{proof}

Suppose $X=\{x_1,x_2\}$, then $|N^-[X]|=6$ (because $N^-[X]=N^-[Y]$) and $N^-[x_1]\cap N^-[x_2] =\emptyset$.
Let $ N^-(x_1)=\{u,v\}$ and $ N^-(x_2)=\{z,t\}$, so that $N^-[X]=\{x_1,x_2,u,v,z,t\} = N^-[Y]$.
Without loss of generality, we may assume that $u\in Y$. Since $D$ has neither digon nor $TT_3$, $N^-(u)\subseteq N^-[x_2]$, which implies by Claim \ref{A} that  $N^-(u)= N^-(x_2) $ and $N^+(u)=\emptyset$, a contradiction.
Therefore, $|X|=|Y|=3$. Let us denote $X=\{x_1,x_2,x_3\}$. We  prove the following claims.

\begin{claim}
 \label{B}
Let $a,b,c\in V(D)$. If $N^-[a]\subseteq N^-[b]\cup N^-[c]$,  then $a\in\{b,c\}$.
\end{claim}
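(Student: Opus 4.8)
The plan is to argue by contradiction: I assume $a\notin\{b,c\}$ and show that $D$ would then have to contain either a digon, a $TT_3$, or a configuration already excluded by Claim~\ref{A}. The starting observation is that $a\in N^-[a]\subseteq N^-[b]\cup N^-[c]$, so since $a$ differs from both $b$ and $c$ it must lie in $N^-(b)\cup N^-(c)$. The hypothesis and conclusion are symmetric in $b$ and $c$, so I may assume $a\in N^-(b)$, i.e. $(a,b)\in A(D)$. Because $D$ is $2$-in-regular I write $N^-(a)=\{v_1,v_2\}$, and the whole argument reduces to locating each $v_i$ inside $N^-[b]\cup N^-[c]$.

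First I would eliminate the immediate possibilities for each $v_i$. It cannot be that $v_i=b$, since with $(a,b)\in A(D)$ this would create the digon $(a,b),(b,a)$, contradicting that $D$ is an oriented graph. Nor can $v_i$ be the remaining in-neighbour of $b$: writing $N^-(b)=\{a,b'\}$, a vertex $v_i\in N^-(b)\setminus\{a\}$ equals $b'$ and then lies in $N^-(a)\cap N^-(b)$, which is empty by Remark~\ref{TT3} applied to the arc $(a,b)$. Consequently each $v_i$ belongs to $\{c\}\cup N^-(c)$.

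The resulting dichotomy is the heart of the proof. If both $v_1,v_2\in N^-(c)$, then $N^-(a)\subseteq N^-(c)$, and since both vertices have in-degree exactly $2$ this forces $N^-(a)=N^-(c)$; Claim~\ref{A} (applicable because $a\neq c$ and $N^-(a)\subseteq N^-[c]$) then gives $N^+(a)=\emptyset$, contradicting $b\in N^+(a)$. Otherwise some $v_i$ equals $c$, say $v_1=c$, so $(c,a)\in A(D)$; the other in-neighbour $v_2$ cannot also equal $c$, hence $v_2\in N^-(c)$, and then $v_2\in N^-(a)\cap N^-(c)$, again empty by Remark~\ref{TT3} applied to the arc $(c,a)$. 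Every branch is contradictory, so $a\in\{b,c\}$.

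The step I expect to be the main obstacle is confining $v_1,v_2$ to $\{c\}\cup N^-(c)$: this is where $2$-in-regularity (to pin down $N^-(b)=\{a,b'\}$), the oriented-graph hypothesis, and $TT_3$-freeness all have to be combined carefully, and it is easy to overlook one of the placements. Once that confinement is secured, the final split between both $v_i$ lying in $N^-(c)$ and one $v_i$ equalling $c$ closes immediately via Claim~\ref{A} and Remark~\ref{TT3} respectively, so no appeal to the configurations $J_2$--$J_{15}$ is needed for this claim.
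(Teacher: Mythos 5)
Your proof is correct and follows essentially the same route as the paper: assume $a\notin\{b,c\}$, place $a\in N^-(b)$ without loss of generality, use the oriented-graph and $TT_3$-free hypotheses to force $N^-(a)\subseteq N^-[c]$, and conclude via Claim~\ref{A} since $b\in N^+(a)$. The only difference is that you split off the subcase $c\in N^-(a)$ and resolve it by a separate appeal to Remark~\ref{TT3}, whereas the paper lets Claim~\ref{A} (whose own proof already handles that subcase) absorb it in one stroke.
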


\begin{proof}
If $a\notin\{b,c\}$, then without loss of generality let us assume that $a\in N^-(b)$. Hence, by Remark \ref{TT3}, $N^-(a)\subseteq N^-[c]$, which contradicts Claim \ref{A} because $N^+(a)\ne \emptyset$.
\end{proof}

\begin{claim}
 \label{Dif in-neigh of X}
 $N^-(x_i)\neq N^-(x_j)$ for all $1\leq i<j\leq3$.
\end{claim}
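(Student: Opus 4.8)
The plan is to argue by contradiction. Suppose that two of the three vertices of $X$ share their in-neighbourhood; relabelling, assume $N^-(x_1)=N^-(x_2)=\{a,b\}$. Since $D$ is $2$-in-regular, this forces a copy of $F_2$ of Figure~\ref{fig:conf-prohibides3} on $\{a,b;x_1,x_2\}$, with all four arcs $a\to x_1$, $a\to x_2$, $b\to x_1$, $b\to x_2$. First I would record the elementary structural facts this imposes: $a\neq b$ and $\{a,b\}\cap\{x_1,x_2\}=\emptyset$ (each is a proper in-neighbour), $a$ and $b$ are non-adjacent and $x_1,x_2$ are non-adjacent (an arc between $x_1,x_2$ together with the common in-neighbour $a$ would create a $TT_3$), and, crucially, that $x_1$ and $x_2$ have \emph{no common out-neighbour}. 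This last fact is exactly the observation made just after Corollary~\ref{coro}: a $TT_3$-free $2$-in-regular oriented graph that is $H_9$-free cannot carry an $F_2$ together with a vertex $w$ having $x_1,x_2\in N^-(w)$, and $D$ is $H_9$-free (as already used above, via $J_1$-freeness) and admits a $(1,\le 2)$-identifying code by Corollary~\ref{coro}.

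Next I would locate $x_1$ and $x_2$ relative to $Y$. Since $x_1,x_2\in N^-[X]=N^-[Y]$, each of them is either a vertex of $Y$ or has an out-neighbour in $Y$; by the previous paragraph, if both lie outside $Y$ their out-neighbours in $Y$ must be distinct. This splits the argument into three cases: both $x_1,x_2\in Y$; exactly one of them in $Y$; and neither in $Y$. In the first case $Y=\{x_1,x_2,y_3\}$, and comparing $N^-[X]$ with $N^-[Y]$ reduces the equality to $N^-[x_3]$ and $N^-[y_3]$ agreeing modulo $\{x_1,x_2,a,b\}$; here I would use Claims~\ref{A} and~\ref{B} (inclusions of closed in-neighbourhoods force equalities and empty out-neighbourhoods) together with the $(1,\le 2)$-identifying code property to contradict $x_3\neq y_3$. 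In the remaining two cases I would exploit the arcs $x_i\to y$ into $Y$ and Remark~\ref{TT3} (which forbids common in- and out-neighbours across an arc) to pin down the second in-neighbours of $a,b$ and the in-neighbours of the relevant $y_j$.

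Carrying out the case analysis, each admissible configuration of the in-neighbours of the $y_j$ around the fixed $F_2$ is designed to display one of the forbidden subdigraphs $J_1,\dots,J_{15}$ of Figure~\ref{fig:conf-prohibides33}, or else to contradict $TT_3$-freeness, Claim~\ref{A}, Claim~\ref{B}, or the established $(1,\le 2)$-code property. Since every branch ends in a contradiction, no two of $N^-(x_1),N^-(x_2),N^-(x_3)$ can coincide, which proves the claim. The main obstacle I anticipate is the bookkeeping in this final case analysis: one must track the second in-neighbour of each of $a$ and $b$ and the two in-neighbours of each $y_j$, and correctly match every resulting local picture to the precise forbidden subdigraph $J_k$, all while respecting the $2$-in-regularity constraint that no vertex acquires a third in-neighbour.
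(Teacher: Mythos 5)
Your opening move is the right one---assume $N^-(x_1)=N^-(x_2)=\{a,b\}$ and hunt for a forbidden configuration---but the proposal has a genuine gap in two respects. First, the fact you single out as crucial, that $x_1$ and $x_2$ have no \emph{common} out-neighbour (via $H_9$-freeness and Corollary~\ref{coro}), is far weaker than what is already on the table: Claim~\ref{A} applies verbatim with $a=x_1$, $b=x_2$, since $N^-(x_1)=N^-(x_2)$ certainly gives $N^-(x_1)\subseteq N^-[x_2]$, and it yields $N^+(x_1)=N^+(x_2)=\emptyset$ (a single out-arc from either vertex already creates a $J_1$). A vertex with empty out-neighbourhood can lie in $N^-[Y]$ only by belonging to $Y$, so $x_1,x_2\in Y$ is forced at once, and your second and third cases (exactly one of $x_1,x_2$ in $Y$, or neither) are vacuous. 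Planning to dispose of them by matching local pictures to $J_1,\dots,J_{15}$ is both unnecessary and, as written, unexecuted.

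Second, even in the one surviving case $Y=\{x_1,x_2,y_3\}$, the proposal stops at ``use Claims~\ref{A} and~\ref{B} together with the $(1,\le 2)$-code property to contradict $x_3\neq y_3$'' without saying how, and this is exactly where the content of the claim lives. The paper's finish is short and concrete: since $|N^-[X]|=|N^-[Y]|\geq 6$ while $|N^-[x_1]\cup N^-[x_2]|=4$, there is $z\in N^-(x_3)$ with $z\notin N^-[x_1]\cup N^-[x_2]$, so $z$ and $x_3$ must be accounted for by $N^-[y_3]$; then $z=y_3$ forces a digon through $x_3$ and $z$, while $\{x_3,z\}=N^-(y_3)$ forces a $TT_3$ on $\{z,x_3,y_3\}$ (because $z\in N^-(x_3)$), both impossible. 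Your sketch never isolates this counting step or these two terminal contradictions; ``each admissible configuration is designed to display one of the forbidden subdigraphs'' is a promise rather than a proof, and the bookkeeping you defer is precisely what a correct argument must supply.
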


\begin{proof}
 Suppose that $N^-(x_1)=N^-(x_2)$. Then, $N^+(x_1)=N^+(x_2)=\emptyset$, because $D$ is $J_1$-free, which implies $x_1,x_2\in Y$. Since $|N^-[X]|\geq 6$, there is $z\in N^-(x_3)\setminus \left(N^-[x_1]\cup N^-[x_2]\right)$. Because $\{x_3,z\}\subseteq N^-[Y]$, $D$ must contain a digon if $z=y_3\in Y$,  or a $TT_3$ if $\{x_3,z\}=N^-(y_3)$, which is a contradiction. Therefore, $N^-(x_1)\neq N^-(x_2)$.
\end{proof}

\begin{claim}
 \label{comun}
If $7\leq|N^-[X]|\leq8$, $N^-(x_i)\cap N^-(x_j)=\{v\}$, $i\ne j$, and there are exactly two or  no arc  between the elements of $X$,   then $|Y\cap \{x_i,x_j\}|\le 1$.
\end{claim}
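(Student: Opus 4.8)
The statement to prove is Claim~\ref{comun}: under the hypotheses that $7\le|N^-[X]|\le 8$, that $N^-(x_i)\cap N^-(x_j)=\{v\}$ for some $i\ne j$, and that the number of arcs among the three vertices of $X$ is either exactly two or zero, we must conclude $|Y\cap\{x_i,x_j\}|\le 1$. My plan is to argue by contradiction: assume both $x_i$ and $x_j$ lie in $Y$ and derive one of the forbidden configurations of Figure~\ref{fig:conf-prohibides33}.

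The plan is as follows. First I would fix notation: write $N^-(x_i)=\{v,a\}$ and $N^-(x_j)=\{v,b\}$ with $a\ne b$ (they are distinct since $N^-(x_i)\cap N^-(x_j)=\{v\}$), so that $N^-[x_i]\cup N^-[x_j]=\{x_i,x_j,v,a,b\}$ accounts for five vertices, and the remaining $2$ or $3$ vertices of $N^-[X]$ come from $N^-[x_k]$ for the third index $k$. The assumption $x_i,x_j\in Y$ is the engine of the contradiction: since $Y$ has only three elements and $N^-[X]=N^-[Y]$ has at least $7$ vertices, the closed in-neighbourhoods $N^-[x_i]$ and $N^-[x_j]$ must cover a large portion of $N^-[Y]$, so the third element $y$ of $Y$ together with $x_i,x_j$ must in-dominate every vertex of $N^-[X]$. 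I would then exploit Remark~\ref{TT3} (no $TT_3$ means adjacent vertices have disjoint in-neighbourhoods and disjoint out-neighbourhoods) and Claims~\ref{A} and~\ref{B} to constrain how $v$, $a$, $b$, and the vertices contributed by $x_k$ can be reached within $Y$.

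The core of the argument is a short case analysis on the arc structure. Because the hypothesis splits into ``exactly two arcs among $X$'' and ``no arc among $X$,'' I would treat these separately. In the no-arc case, $x_i,x_j,x_k$ are pairwise non-adjacent, so each $x_m\in Y$ forces its own in-neighbours to be covered by $N^-[Y]=N^-[x_i]\cup N^-[x_j]\cup N^-[y]$; tracking which of $v,a,b$ and the in-neighbours of $x_k$ are hit by $y$ (using that $D$ is an oriented $TT_3$-free graph, so $y$ cannot both receive and send an arc to the same neighbourhood) pins down a specific adjacency pattern that I expect to match one of $J_2$ through $J_{15}$. In the two-arc case, the two arcs among $\{x_i,x_j,x_k\}$ (note $(x_i,x_j)$ and $(x_j,x_i)$ cannot both occur, as there are no digons) impose extra disjointness via Remark~\ref{TT3}, and the shared neighbour $v$ of $x_i,x_j$ combined with these arcs should again be realizable only inside a forbidden subdigraph. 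In each subcase the conclusion is that the resulting configuration on $\{x_i,x_j,x_k,v,a,b,y,\dots\}$ is isomorphic to one of the digraphs of Figure~\ref{fig:conf-prohibides33}, contradicting the standing hypothesis that $D$ avoids all of them.

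The main obstacle I anticipate is bookkeeping rather than conceptual: correctly enumerating how the third vertex $y\in Y\setminus\{x_i,x_j\}$ can in-dominate the two or three ``extra'' vertices of $N^-[X]$ while respecting that $D$ is an oriented graph with no $TT_3$, and then matching each surviving pattern to the correct $J_m$. The count $|N^-[X]|\in\{7,8\}$ is what makes the argument finite, so I would first nail down exactly how many vertices $N^-[x_k]$ contributes beyond the five already listed, and use the cardinality bound together with $|Y|=3$ to force $y$ to play a double role (being an in-neighbour of more than one target), which is precisely what the forbidden subdigraphs encode. Keeping the adjacency relations of $v$, $a$, $b$ straight across the two arc-regimes is where an error is most likely to creep in, so I would organize the proof as an explicit, exhaustive table of subcases, each terminated by naming the specific forbidden digraph it produces.
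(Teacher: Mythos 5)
Your high-level frame --- contradiction assuming $x_i,x_j\in Y$, a split on whether $X$ spans zero or two arcs, and reliance on Remark~\ref{TT3} and Claims~\ref{A} and~\ref{B} --- coincides with the paper's. The gap is that your proposal stops exactly where the work starts: each subcase is closed with ``I expect to match one of $J_2$ through $J_{15}$'' or ``should again be realizable only inside a forbidden subdigraph,'' and that predicted endpoint is not where the contradiction actually comes from. In the paper's proof no digraph of Figure~\ref{fig:conf-prohibides33} beyond what is already packaged inside Claims~\ref{A} and~\ref{B} (namely $J_1$, digons and $TT_3$) is ever exhibited. The decisive step is a cardinality argument aimed at the \emph{third} vertex $x_k$ of $X$: writing $Y=\{x_i,x_j,y\}$, the overlap $N^-(x_i)\cap N^-(x_j)=\{v\}$ together with $|N^-[Y]|=|N^-[X]|\ge 7$ leaves $N^-[y]$ to absorb almost all of $N^-[x_k]$. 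Concretely, when there is no arc in $X$ and $|N^-[X]|=7$ one gets $N^-[x_k]\subseteq N^-[x_m]\cup N^-[y]$ for a single $m\in\{i,j\}$, contradicting Claim~\ref{B}; when $|N^-[X]|=8$ one gets $N^-(x_k)\subseteq N^-[y]$ outright, contradicting Claim~\ref{A} since $N^+(y)\neq\emptyset$. No exhaustive table of configurations is needed, and chasing copies of $J_2$--$J_{15}$ would be chasing patterns that do not arise here.

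The two-arc case also hides a structural step your sketch does not anticipate: by Remark~\ref{TT3} an arc between $x_i$ and $x_j$ would force $N^-(x_i)\cap N^-(x_j)=\emptyset$, so both arcs are incident with $x_k$; the bound $|N^-[X]|\ge 7$ then eliminates every orientation except $v=x_k$ with both arcs leaving $x_k$, which in turn forces $|N^-[X]|=7$ and $N^-(x_k)\cap\bigl(N^-[x_i]\cup N^-[x_j]\bigr)=\emptyset$, hence $N^-(x_k)\subseteq N^-[y]$ and Claim~\ref{A} applies again (here $N^+(x_k)\neq\emptyset$ is immediate). So the two missing ingredients are: the identification of $x_k$ and its in-neighbourhood as the object to which Claims~\ref{A}/\ref{B} are applied, and the preliminary reductions (ruling out a common in-neighbour of all three $x$'s, pinning down $v=x_k$ in the two-arc case) that make those applications possible. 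Without them the proposal is a statement of intent rather than a proof.
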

\begin{proof}
We proceed by contradiction. Assume $Y=\{x_1,x_2,y\}$. First suppose that there  is no arc   between the elements of $X$. If $v\in N^-(x_1)\cap N^-(x_2)\cap N^-(x_3)$, then according to Claim \ref{Dif in-neigh of X} $|N^-[X]|=7$ and {$N^-[x_3]\subseteq N^-[x_1]\cup N^-[y]$}, which contradicts Claim \ref{B}. Hence, $N^-(x_1)\cap N^-(x_2)\cap N^-(x_3)=\emptyset$. If $|N^-[X]|=7$, let $N^-(x_1)=\{u,v\}$, $N^-(x_2)=\{v,z\}$ and $N^-(x_3)=\{z,w\}$. Since $N^-(v)\cap N^-[X]\subseteq\{x_3,w\}$, by Remark \ref{TT3}, $v\notin Y$,  and  analogously $z\notin Y$. Consequently, $N^-[x_3]\subseteq N^-[x_2]\cup N^-[y]$, which contradicts Claim \ref{B}.
If $|N^-[X]|=8$, then  $N^-(x_3)\subseteq  N^-[y]$, a contradiction to Claim \ref{A} because $y\not\in X$ and so $N^+(y)\ne \emptyset$.
Finally assume that there are two arcs between the elements of $X$. Notice that by Remark \ref{TT3},  both   arcs between the elements of $X$ are incident in $x_3$. Furthermore, since $7\leq |N^-[X]|\leq 8$ and $N^-(x_1)\cap N^-(x_2)=\{v\}$, $v=x_3$ and $|N^-[X]|=7$, we have $N^-(x_3)\subseteq N^-[y]$, a contradiction to Claim \ref{A}.
\end{proof}

Let $N^-(x_1)=\{u,v\}$.  We distinguish the following cases according to the number of arcs between the vertices of $X$.

\noindent Case 1: First let us assume that there are no arcs between the elements of $X$.

\noindent Subcase 1.1: Suppose $|N^-[X]|=6$. Then, $N^-[X]=\{x_1,x_2,x_3,u,v,z\}$, so
  Claim \ref{Dif in-neigh of X} implies that $|N^-(x_i)\cap N^-(x_j)|=1$ for all $i\neq j$. Let $N^-(x_2)=\{v,z\}$. Observe that $v\not\in N^-(x_3)$, otherwise $N^-(x_3)=N^-(x_i)$ for some $i\in \{1,2\}$,
 contradicting   Claim \ref{Dif in-neigh of X}.
   Therefore $N^-(x_3)=\{u,z\}$.
  Let $y\in Y\setminus X$, then $y\in \{u,v,z\}$.  We can check that $|N^-(y)\cap N^-[X]|\le 1$ for all $y\in \{u,v,z\}$, because $D$ is a $TT_3$-free oriented graph, which is a contradiction.

\noindent Subcase 1.2: Suppose $|N^-[X]|=7$. Then $N^-[X]=\{x_1,x_2,x_3,u,v,z,w\}$. By Claim \ref{Dif in-neigh of X}, there are  two cases to be considered, namely,  $|N^-(x_1)\cap N^-(x_2)\cap N^-(x_3)|=1$ and  $|N^-(x_1)\cap N^-(x_2)\cap N^-(x_3)|=0$.

\noindent Subsubcase 1.2.1: If $|N^-(x_1)\cap N^-(x_2)\cap N^-(x_3)|=1$, w.l.o.g.  $N^-(x_2)=\{v,z\}$ and $N^-(x_3)=\{v,w\}$. Since $D$ is an oriented graph and does not contain $TT_3$, $N^-(v)\cap N^-[X]=\emptyset$, which means that $v\notin Y$ and $v\in N^-(Y)$.  Since
 $N^+(v)\cap\{u,z,w\}=\emptyset$, it follows that $Y\cap X\neq\emptyset$. By Claim \ref{comun},
 $|X\cap Y|= 1$.
 W.l.o.g.  suppose that $X\cap Y=\{x_1\}$. If $Y=\{x_1,z,w\}$, then $x_2\in N^-(w)$ and $x_3\in N^-(z)$, implying that $D$ contains $J_4$.  If $Y=\{x_1,u,z\}$, then $x_2\in N^-(u)$ and $N^-(u)\subseteq\{x_2,x_3,w\}$.
 If $N^-(u)=\{x_2,x_3\}$, then $w\in N^-(z)$ and hence $D$ contains $J_6$. If $N^-(u)=\{x_2,w\}$, then $x_3\in N^-(z)$, which implies that $D$ contains $J_5$.

 \noindent Subsubcase 1.2.2: If  $|N^-(x_1)\cap N^-(x_2)\cap N^-(x_3)|=0$, w.l.o.g.   $N^-(x_2)=\{v,z\}$ and $N^-(x_3)=\{z,w\}$. By Claim \ref{comun}, $|Y\cap \{x_1,x_2\}|\le 1$ and $|Y\cap \{x_2,x_3\}|\le 1$.
 Moreover, if $\{x_1,x_3\}\subseteq Y$, then since $x_2\in N^-[Y]$, we have $\{u,w\}\cap Y\neq\emptyset$;  w.l.o.g. let us assume that $Y=\{x_1,x_3,u\}$. Then, $x_2\in N^-(u)$ and $N^-(u)\subseteq\{x_2,x_3,w\}$. If $N^-(u)=\{x_2,x_3\}$, then $D$ contains $J_8$, and if $N^-(u)=\{x_2,w\}$, then $D$ contains $J_{10}$. Therefore, $|Y\cap X|\leq1$.
Suppose that  $X\cap Y=\{x_1\}$ and let $Y=\{x_1, y,y'\}$,  then $N^-[x_3]\subseteq N^-[y]\cup N^-[y']$, which contradicts Claim \ref{B}. Hence, $X\cap Y\neq\{x_1\}$, and similarly  $X\cap Y\neq\{x_3\}$.
Then, $X\cap Y=\{x_2\}$. If $v\in Y$, then there is $y\in Y\setminus \{x_2,v\}$, such that $N^-(y)=\{x_1,u\}$ contradicting Remark \ref{TT3}. Hence, $v\not\in Y$, and analogously $z\notin Y$. Therefore $Y=\{x_2,u,w\}$, and then $x_1\in N^-(w)$, implying that $N^-(u)=\{x_3,x_2\}$. Consequently, $D$ contains $J_8$.
If $|Y\cap X|=0$, by symmetry, we only have to consider the following two cases. If $Y=\{u,v,z\}$, then $x_2\in N^-(u)$ and $x_1\in N^-(z)$, implying that $D$ contains $J_4$. If $Y=\{u,z,w\}$, then $x_3\in N^-(u)$ implying that $N^-(u)=\{x_2,x_3\}$, and $D$ contains $J_8$.

\noindent Subcase 1.3: Suppose $|N^-[X]|=8$.  W.l.o.g. $N^-(x_2)=\{v,z\}$, and $N^-(x_3)=\{t,w\}$.
Observe that  $v\notin Y$,  otherwise $N^-(v) \subseteq N^-[x_3]$ in contradiction to Claim \ref{A}.
If $Y\cap X=\emptyset$,  then   we can assume that $t\in Y$ and $v\in N^-(t)$.  Consequently, $\{u,z\}\cap N^-(t)=\emptyset$, otherwise $D$ contains $J_1$, therefore $\{x_3,w\}\cap N^-(t)\neq\emptyset$, a contradiction. Therefore $Y\cap X\neq\emptyset$. If $|Y\cap X|=2$, then by Claim \ref{comun}, $\{x_1,x_3\}\subseteq Y$ or $\{x_2,x_3\}\subseteq Y$.
If $Y=\{y,x_2,x_3\}$, then $N^-[x_1]\subseteq N^-[x_2]\cup N^-[y]$, contradicting Claim \ref{B}. Then, $Y\ne \{y,x_2,x_3\}$, and similarly $Y\ne \{y,x_1,x_3\}$.
 Thus $|Y\cap X|=1$.
If $Y=\{x_1,y,y'\}$ or $Y=\{x_3,y,y'\}$, then $N^-[x_3]\subseteq N^-[y]\cup N^-[y']$ or $N^-[x_1]\subseteq N^-[y]\cup N^-[y']$, respectively, which contradicts Claim \ref{B}.

\noindent Subcase 1.4: Suppose $|N^-[X]|=9$. Hence, the in-neighborhoods of the elements of $X$  must be disjoint, the same  is true  for $Y$.
 Let $N^-(x_i)=\{u_i,v_i\}$, for   $i=1,2,3$.  Observe that if $1\leq|X\cap Y|\leq2$, then $N^-[x_i]\subseteq N^-[y]\cup N^-[y']$ for some $i\in \{1,2,3\}$ and  $y,y'\in Y\setminus \{x_i\}$, in contradiction to Claim \ref{B}. Therefore, $X\cap Y=\emptyset$.
 Without loss of generality there are two cases to be considered.

\noindent Subsubcase 1.4.1: If $Y=\{u_1,v_1,u_2\}$, then $x_1\in N^-(u_2)$.
 If $x_3\in N^-(u_1)$, then without loss of generality $u_3\in N^-(v_1)$ and $v_3\in N^-(u_2)$; moreover, $x_2\in N^-(v_1)$ and $v_2\in N^-(u_1)$ or $x_2\in N^-(u_1)$ and $v_2\in N^-(v_1)$, implying that $D$ contains $J_{14}$ or $J_{15}$, respectively.
 If $x_3\in N^-(u_2)$, then we may assume that $u_3\in N^-(u_1)$ and $v_3\in N^-(v_1)$, and  so $x_2\in N^-(u_1)$ and $v_2\in N^-(v_1)$, implying that $D$ contains $J_{15}$.

\noindent Subsubcase 1.4.2: Let $Y=\{u_1,u_2,u_3\}$. Without loss of generality, suppose $x_2\in N^-(u_1)$, then by Remark \ref{TT3}, $N^-(u_1)\setminus \{x_2\}\subseteq N^-[x_3]$. Since there is no arc between the elements of $Y$ there are two cases to be considered.

  1.4.2.1: If $N^-(u_1)=\{x_2,x_3\}$, then $v_3\in N^-(u_2)$ and $v_2\in N^-(u_3)$. Hence, $x_1\in N^-(u_2)$ and $v_1\in N^-(u_3)$, or $v_1\in N^-(u_2)$ and $x_1\in N^-(u_3)$; in any case  $D$ contains $J_{14}$.

  1.4.2.2: If $N^-(u_1)=\{x_2,v_3\}$, then $x_3\in N^-(u_2)$, and $v_2\in N^-(u_3)$. If $x_1\in N^-(u_2)$, then $v_1\in N^-(u_3)$, implying that $D$ contains $J_{14}$. Finally, if $x_1\in N^-(u_3)$, then $v_1\in N^-(u_2)$, implying that $D$ contains $J_{13}$.

\noindent Case 2:  Suppose there is just one arc between the elements of $X$, say  $(x_1,x_2)\in A(D)$. Then, $|N^-(X)|=6,7,8$, and $N^-(x_1)\cap N^-(x_2)=\emptyset$ by Remark \ref{TT3}.  Let   $N^-(x_2)=\{x_1,z\}$ and let us distinguish the following cases.

 \noindent Subcase 2.1: $|N^-[X]|=6$. Hence, $N^-[X]=\{x_1,x_2,x_3,u,v,z\}$,
          and by Claim \ref{Dif in-neigh of X} let us assume  w.l.o.g. that $N^-(x_3)=\{v,z\}$.
         Moreover, since $D$ is an oriented graph and does not contain $J_1$, $N^-(z)\cap N^-[X]\subseteq\{u\}$ and $N^-(v)\cap N^-[X]\subseteq\{x_2\}$, therefore $z,v\notin Y$; hence $u\in Y$. Since $D$ is a $TT_3$-free oriented graph, $N^-(u)\subseteq \{x_2,x_3,z\}$. Moreover, by Remark \ref{TT3} $z\notin N^-(u)$. Hence, $N^-(u)=\{x_2,x_3\}$, implying that $D$ contains $J_2$.

\noindent Subcase 2.2: $|N^-[X]|=7$. In this case, there is $w\in N^-(x_3)\setminus \left(X\cup\{u,v,z\}\right)$. By symmetry, $N^-(x_3)=\{z,w\}$ or $N^-(x_3)=\{v,w\}$.
          		 First suppose that $N^-(x_3)=\{z,w\}$. Since $D$ is a $TT_3$-free oriented graph,  if $z\in Y$, then $N^-(z)=N^-(x_1)$, which is a contradiction by Claim \ref{A}. Hence $z\not\in Y$. Analogously,
            if $w\in Y$ and $x_2\in N^-(w)$, then $N^-(w)\subseteq\{x_2,u,v\}$, implying that $D$ contains $J_7$; and if $x_2\not\in N^-(w)$, then $N^-(w)\subseteq N^-[x_1]$, contradicting Claim \ref{A}.
            Thus, $w\notin Y$.
		 If $v\in Y$, then $N^-(v)\subseteq \left(N^-[x_3]\cup\{x_2\}\right)$. Hence, by Claim \ref{A}, $x_2\in N^-(v)$, implying that $N^-(v)\subseteq\{x_2,x_3,w\}$, but if $N^-(v)=\{x_2,x_3\}$ or $N^-(v)=\{x_2,w\}$, then $D$ contains $J_2$ or $J_9$, respectively.
		Therefore, $v\notin Y$, and by symmetry we can conclude also that $u\notin Y$, a contradiction.
			
			 Assume now that $N^-(x_3)=\{v,w\}$. Observe that  $N^-(v)\cap N^-[X]\subseteq\{x_2,z\}$, then $v\notin Y$.
			If $u\in Y$, then $N^-(u)\subseteq\{x_2,x_3,z,w\}$, but it could not be neither $\{x_2,z\}$ nor $\{x_3,w\}$ (by Remark \ref{TT3}). If $N^-(u)=\{x_2,x_3\}$, then $D$ contains $J_3$; if $N^-(u)=\{x_2,w\}$, then $D$ contains $J_9$; if $N^-(u)=\{x_3,z\}$, then $D$ contains $J_7$; and if $N^-(u)=\{z,w\}$, then $D$ contains $J_{10}$. Therefore, $u\notin Y$.
			If $w\in Y$, then $N^-(w)\subseteq\{x_1,x_2,u,z\}$. Hence, by Remark  \ref{TT3} and Claim \ref{A}, $N^-(w)=\{u,z\}$ or $N^-(w)=\{u,x_2\}$, implying that $D$ contains $J_{12}$ or $J_5$, respectively. Therefore, $w\notin Y$.
			If $z\in Y$, then $N^-(z)\subseteq\left(N^-[x_3]\cup N^-(x_1)\right)$. Hence, by Claim \ref{A} and Remark \ref{TT3}, $N^-(z)=\{u,w\}$ or $N^-(z)=\{u,x_3\}$, yielding that $D$ contains $J_{11}$ or $J_6$, respectively. Hence, $z\notin Y$, a contradiction.

     \noindent Subcase 2.3: $|N^-[X]|=8$. In this case, $N^-(x_3)=\{t,w\}$ for $t,w\not \in N^-[x_1]\cup N^-[x_2]$.
  First, observe that if $Y\cap\{x_1,x_2\}=\emptyset$, then without loss of generality $t\in Y$, $x_1\in N^-(t)$, yielding that $N^-(t)=N^-(x_2)$, a contradiction  to   Claim \ref{A}.  Therefore $Y\cap\{x_1,x_2\}\neq\emptyset$. Hence, since $N^-[x_3]\cap(N^-[x_1]\cup N^-[x_2])=\emptyset$ it follows that $N^-[x_3]\subseteq N^-[y]\cup N^-[y']$, with $y,y'\in Y$, yielding by Claim \ref{B} that $x_3\in Y$.
         If $x_2\notin Y$, then $Y=\{x_1,x_3,y\}$ and  $\{x_2,z\}=N^-(y)$, which is a contradiction to Remark \ref{TT3}. Therefore, $Y=\{x_2,x_3,y\}$, yielding that $N^-(x_1)\subseteq N^-[y]$, contradicting Claim \ref{B}.

     \noindent Case 3: Suppose  there are exactly two arcs between the elements of $X$. Then, $|N^-[X]|=6,7$. Let us distinguish the following cases.

    \noindent Subcase 3.1: First,  assume that $(x_1x_2x_3)$ is a path of $D$.
            Then, $N^-(x_2)\cap N^-(x_3)=N^-(x_2)\cap N^-(x_1)=\emptyset$ by Remark \ref{TT3}.  Hence, $ N^-(x_2)=\{z,x_1\}$.

 \noindent Subsubcase 3.1.1: $|N^-[X]|=6$. Without loss of generality, we may assume that $N^-(x_3)=\{x_2,u\}$.
             Observe that if $u\in Y$, then $N^-(u)=\{x_2, z\}$,  a contradiction to Remark \ref{TT3},  and  then $u\not\in Y$.
            If $v\in Y$, then $x_2\notin N^-(v)$  again  by Remark \ref{TT3}. Hence, if $v\in Y$ then $N^-(v)=\{x_3,z\}$, yielding that $D$ contains $J_4$.   Therefore, $z\in Y$ and $|Y\cap X|=2$. By Remark \ref{TT3} and Claim \ref{A}, $N^-(z)=\{x_3,v\}$, implying that $D$ contains $J_3$.

  \noindent Subsubcase 3.1.2: $|N^-[X]|=7$. Then $N^-(x_3)=\{x_2,w\}$  for some $w\not\in N^-[x_1]\cup N^-[x_2]$.
                                         If $w\in Y$, then $N^-(w)\subseteq (N^-[x_1]\cup\{z\})$ and, by Claim \ref{A} and Remark~\ref{TT3}
                                  $z\in N^-(w)$ and $N^-(w)\subseteq\{u,v,z\}$.  This implies that $D$ contains $J_6$. Therefore, $w\notin Y$.
                                If $z\in Y$,  then $N^-(z)\subseteq N^-(x_1)\cup\{x_3,w\}$. Hence, by Claim \ref{A} and Remark \ref{TT3}, without loss of generality, $N^-(z)=\{v,w\}$ or $N^-(z)=\{v,x_3\}$, implying that $D$ contains $J_8$ or $J_2$, respectively. Therefore, $z\notin Y$.
                                If $u\in Y$,  then $N^-(u)\subseteq N^-[x_3]\cup\{z\}$.  By Claim \ref{A} and Remark \ref{TT3}, $N^-(u)=\{z,x_3\}$ or $N^-(u)=\{z,w\}$, yielding that $D$ contains $J_4$ or $J_5$, respectively. Therefore, $u\notin Y$ and, by symmetry  $v\notin Y$, hence, $Y\setminus X=\emptyset$, a contradiction.

    \noindent Subcase 3.2: Second, let us assume that $N^-(x_2)=\{x_1,x_3\}$. If $|N^-[X]|=6$, then
 without loss of generality suppose that  $N^-(x_3)=\{v,z\}$. Observe that $v\not\in Y$, otherwise $N^-(v)=\{x_2\}$. If $z\in Y$, then $N^-(z)\subseteq\{x_1,x_2,u\}$ and by Remark \ref{TT3}, $N^-(z)= \{x_2,u\}$, yielding that $D$ contains $J_3$. Hence, $z\not\in Y$, and reasoning similarly, $u\not\in Y$, a contradiction. If $|N^-[X]|=7$, then $N^-(x_3)=\{z,w\}$ for some $w\not \in \{x_1,x_2,x_3,u,v,z\}$. If $u\in Y$, then $N^-(u)\subseteq N^-[x_3]\cup\{x_2\}$, and  by Claim \ref{A} and Remark \ref{TT3}, $x_2\in N^-(u)$ and $N^-(u)\subseteq\{x_2,z,w\}$, implying that $D$ contains $J_3$. Therefore, $u\notin Y$. Analogously, $v,z,w\notin Y$, yielding that $Y\setminus X=\emptyset$, a contradiction.

    \noindent Subcase 3.3: Third, without loss of generality, let us assume that $(x_1,x_2),(x_1,x_3)\in A$. If $|N^-[X]|=6$, then $N^-(x_2)=\{x_1,z\}=N^-(x_3)$, which contradicts Claim \ref{Dif in-neigh of X}. Hence, $|N^-[X]|=7$. Let $N^-(x_2)=\{x_1,z\}$ and $N^-(x_3)=\{x_1,w\}$.  Observe that   we also may assume that there are exactly two arcs between the elements of $Y$ and  there is some $y\in Y$ satisfying the same as $x_1$, that is,  $N^+[y]\cap Y=Y-y$. Therefore, if $x_1\in Y$ we can assume that $Y=\{x_1,u,w\}$ and $N^+(u)\cap Y=\{x_1,w\}$. Then, $N^-(u)\subseteq\{x_3,x_2,z\}$ and by Remark \ref{TT3}, $x_3\in N^-(u)$, implying that $N^-(u)=\{x_3,x_2\}$ or $N^-(u)=\{x_3,z\}$ yielding that $D$ contains $J_2$ or $J_9$, respectively. Moreover, since $N^+(x_1)\cap N^-[X]=\{x_2,x_3\}$, it follows that
$Y\cap\{x_2,x_3\}\neq\emptyset$. Furthermore, by Claim \ref{comun}, $|Y\cap\{x_2,x_3\}|=1$. Without loss of generality, suppose  $Y\cap X=\{x_2\}$.
If $Y=\{x_2,z,u\}$, then $u\in N^+(z)$, yielding that $N^-(z)\subseteq\{v,x_3,w\}$. By Remark \ref{TT3}, $N^-(z)=\{v,w\}$ or $N^-(z)=\{v,x_3\}$, implying that $D$ contains $J_5$ or $J_4$, respectively.  If $Y=\{x_2,z,w\}$, then $z\in N^-(w)$ and $x_3\in N^-(z)$. Then, without loss of generality $u\in N^-(z)$ yielding that $D$ contains $J_3$.
Therefore, $z\notin Y$. If $Y=\{x_2, u,v\}$, then without loss of generality $x_3\in N^-(u)$ and $w\in N^-(v)$, implying that $D$ contains $J_3$. Finally, if $Y=\{x_2, u,w\}$, then $x_3\in N^-(u)$ and $v\in N^-(w)$, yielding that $D$ contains $J_2$.

\noindent Case 4: Suppose there are three arcs between the elements of $X$. Hence, $|N^-[X]|=6$ and since $D$ is $TT_3$-free, we may assume that $(x_1x_2x_3x_1)$ is a directed triangle. Then, $N^-(x_i)\cap N^-(x_j)=\emptyset$, for all $i\ne j$.
Let $N^-(x_1)=\{x_2,u\}$, $N^-(x_2)=\{x_3,v\}$ and $N^-(x_3)=\{x_1,z\}$.
Notice that if $z\in N^-(u)$ or $v\in N^-(u)$, then $D$ contains $J_2$ or $J_3$, respectively. Therefore, since $D$ is a $TT_3$-free oriented graph, $N^-(u)\cap N^-[X]\subseteq\{x_2\}$ and  $u\notin Y$. Observe that, by symmetry, we can conclude the same for $v$ and $z$, obtaining again a contradiction.
\end{proof}

\bibliographystyle{plain}
\setlength{\itemsep}{-2mm}

\end{document}